\newtheorem{theorem}{Theorem}[section]
\newtheorem{lemma}[theorem]{Lemma}
\newtheorem{proposition}[theorem]{Proposition}
\newtheorem{remark}[theorem]{Remark}
\numberwithin{equation}{section}
\newcommand*{\N}{\ensuremath{\mathbb{N}}}
\newcommand*{\Z}{\ensuremath{\mathbb{Z}}}
\newcommand*{\R}{\ensuremath{\mathbb{R}}}
\newcommand*{\C}{\ensuremath{\mathbb{C}}}
\newcommand*{\F}{\ensuremath{\mathcal{F}}}
\newcommand*{\I}{\ensuremath{\mathcal{I}}}
\newcommand*{\e}{\ensuremath{\varepsilon}}
\title[The Gaussian Wave Packet Transform via Quadrature Rules]{The Gaussian Wave Packet Transform\\via Quadrature Rules}
\author{Paul Bergold}
\address{(Paul Bergold) Department of Mathematics, University of Surrey, Guildford, UK}
\email{p.bergold@surrey.ac.uk}
\author{Caroline Lasser}
\address{(Caroline Lasser) Zentrum Mathematik, Technische Universit\"at M\"unchen, Germany}
\email{classer@ma.tum.de}
\date{\today}
\keywords{Gaussian wave packet transform; FBI transform; Quadrature rules; Schr\"odinger equation.}
\subjclass[2010]{42A38, 65D32, 65Z05, 81Q20}
\begin{document}
\maketitle

\begin{abstract}
	We analyse the Gaussian wave packet transform.
	Based on the Fourier inversion formula and a partition of unity, which is formed by a collection of Gaussian basis functions, a new representation of square-integrable functions is presented.
	Including a rigorous error analysis, the variants of the wave packet transform are then derived by a discretisation of the Fourier integral via different quadrature rules.
	Based on Gauss--Hermite quadrature, we introduce a new representation of Gaussian wave packets in which the number of basis functions is significantly reduced.
	Numerical experiments in 1D illustrate the theoretical results.
\end{abstract}
%

\section{Introduction}
Gaussian functions and their generalisations occur in almost all fields of study.
From simulations of quantum dynamics based on Gaussian wave packets to filter design in signal processing:
Gaussian functions have been proven to have outstanding properties and are used in many models today.
In the present paper, we focus on a question that is motivated by the development of numerical methods for quantum molecular dynamics and concerns the representation of wave functions:\\[-3mm]
\begin{center}
	How can a given wave function be efficiently decomposed into\\
	a linear combination of Gaussian wave packets?\\[2mm]
\end{center}

Using a slightly different wording, this question can be found in numerous fields.
For example in numerical analysis, where Gaussians are used in the form of radial basis function (RBF) interpolations to construct solutions to PDEs, see \cite{Larsson:2003}, in widely used applications of statistics, in which probability densities are approximated by Gaussian mixtures, see \cite{Titterington:1985}, or in seismology, where Gaussians appear in connection with the Gabor transform and are used for the decomposition of seismic waves, see \cite{Margrave:2001}.

With an eye on quantum dynamics, Gaussian superpositions appear in many state-of-the-art techniques to approximate solutions to the Schr\"odinger equation, such as the variational multi-configuration Gaussian wave packet (vMCG) method, see \cite{Worth:2004}, which is derived from the multi-configuration time-dependent Hartree (MCTDH) method, see \cite{Meyer:1990}, and uses time-dependent (frozen) Gaussian functions as a basis set.
In particular, the same wave function ansatz is used for the spawning method, see \cite{Martinez:2002}, and its direct dynamics implementation, known as ab initio multiple spawning, see \cite{Ben-Nun:2000}.
In the present paper we analyse the approximation of wave functions by superpositions of Gaussian wave packets, which was used by authors in different areas and is known under different names, e.g. as the \emph{Gaussian wave packet transform} \cite{Qian:2010} or the \emph{continuous Gabor transform} \cite{Feichtinger:1998,Gabor:1946}.
This continuous superposition of Gaussians can be understood as a special variant of the so-called \emph{Fourier-Bros-Iagolnitzer (in short: FBI) transform}, which is used in microlocal analysis to analyse the distribution of wave packets in position and momentum space simultaneously, see e.g. \cite{Martinez:2002}.
Similar to the \emph{short time Fourier transform (STFT)}, there exists an inversion formula, see e.g. \cite[Proposition~5.1]{Lasser:2020}, which yields that any function $\psi\in L^2(\R^d)$ can be decomposed as
\begin{align}\label{eq:inverse_FBI}
	\psi
	=(2\pi\e)^{-d}\int_{\R^{2d}}\left\langle g_z\mid\psi\right\rangle g_z\,\mathrm{d}z,
\end{align}
where $\e>0$ is a small parameter and the semiclassically scaled wave packet $g_z$ with phase space center $z=(q,p)\in\R^{2d}$ is defined for a given Schwartz function $g\colon\R^d\to\C$ of unit norm, that could be but need not be a Gaussian, by
\begin{align}\label{def_gz}
	g_z(x)
	:=\e^{-d/4}g\left(\frac{x-q}{\sqrt{\e}}\right)e^{ip\cdot(x-q)/\e},
	\quad x\in\R^d.
\end{align}
We note that the inner product $\langle\bullet\mid\bullet\rangle$ in $L^2(\R^d)$ is taken antilinear in its first and linear in its second argument and that there are different conventions for the phase factor of the wave packet, e.g. in \cite{Combescure:2012} the authors work with $e^{ip\cdot\left(x-q/2\right)/\e}$.\\

Using an arbitrary grid $\{q_k\}_{k\in\Gamma_q},\,\Gamma_q\subseteq\Z^d$, in position space, we introduce the Gaussian summation curve
\begin{align}\label{eq:def:S}
	S(x)
	:=\sum_{k\in\Gamma_q}|g_0(x-q_k)|^2
\end{align}
to define a partition of unity generated by the functions $|g_0(x-q_k)|^2/S(x)$ to derive a new semi-discrete exact representation of the following form (see Proposition~\ref{fact:rep1})
\begin{align}\label{eq:intro_rep0}
	\psi(x)
	&=(2\pi\e)^{-d}\left[\frac{1}{S(x)}\sum_{k\in\Gamma_q}\int_{\R^d}\langle g_{(q_k,p)}\mid\psi\rangle\,g_{(q_k,p)}(x)\,\mathrm{d}p\right].
\end{align}
Afterwards, by discretising the remaining integral over momentum space via different quadrature rules, we arrive at a discrete superposition of the basis functions $g_{j,k}:=g_{(q_k,p_j)}$, as follows:
\begin{align}\label{eq:intro_rep1}
	\psi(x)
	\approx\frac{1}{S(x)}\sum_{k\in\Gamma_q}\sum_{j\in\Gamma_p^{(\operatorname{rule})}}r_{j,k}^{(\operatorname{rule})}\,g_{j,k}(x),
\end{align}
where the representation coefficients $r_{j,k}^{(\operatorname{rule})}$ are complex numbers depending on $\psi$ and the underlying quadrature rule in momentum space.
In particular, we show that the representation coefficients can be calculated analytically if both the wave function $\psi$ of interest and the basis functions $g_{j,k}$ are Gaussian wave packets, which results in approximations that are of special interest for numerical computations in quantum dynamics.
For this application, the choice of complex-valued Gaussians is particularly attractive due to their analytical properties (ground states of the harmonic oscillator), which makes it possible to express time-evolved basis functions in the original basis of Gaussians without high-dimensional numerical integration or the inversion of overlap matrices.
To the best of our knowledge, the discretisation of the FBI transform according to \eqref {eq:intro_rep1} was only used with uniform Riemann sums in each coordinate direction for both the position and the momentum integral, which from a numerical point of view is not feasible in high dimensions, since the number of function evaluations increases exponentially with the dimension.
To overcome the curse of dimensions to a certain extent, we propose a different discretisation of the momentum integral based on Gauss--Hermite quadrature, which significantly reduces the number of basis functions compared to Riemann sums.
Based on a rigorous error analysis, we therefore present a new approach on how the previously used discretisations of the wave packet transform can be further improved.
\begin{remark}
	In this paper we use different notations for Gaussian wave packets, which we would like to summarise briefly below.
	From now on we assume that $g$ is the Gaussian envelope
	\begin{align}\label{def:g}
		g(x)
		:=\pi^{-d/4}\det(\operatorname{Im}C)^{1/4}\exp\left(\frac{i}{2}x^TC x\right)
		\quad\text{for all $x\in\R^d$},
	\end{align}
	where $C\in\C^{d\times d}$ is an element of the Siegel upper half-space $\mathfrak{S}^+(d)$, see \cite{Siegel:1939}, which means that $C$ is complex symmetric with positive definite imaginary part.
	From \eqref{def:g} it then follows that $g_z$ is the semiclassically scaled Gaussian wave packet
	\begin{align*}
		g_z(x)
		&=g_{(q,p)}
		=g_z^{C,\e}(x)\\
		&=(\pi\e)^{-d/4}\det(\operatorname{Im}C)^{1/4}\exp\left[\frac{i}{\e}\left(\frac{1}{2}(x-q)^TC(x-q)+p^T(x-q)\right)\right].
	\end{align*}
	The dependence on $C$ and $\e$ is always assumed implicitly in the short-hand notation $g_z=g_{(q,p)}$ and in the one-dimensional setting we always write $\gamma$ instead of $C$.
	Furthermore, for a grid point $z_{j,k}=(q_k,p_j)\in\R^{2d}$, we write
	\begin{align*}
		g_{j,k}
		=g_{(q_k,p_j)}
	\end{align*}
	as well as
	\begin{align}\label{eq:defg0}
		g_0(x)
		:=g_{(0,0)}(x)
		=(\pi\e)^{-d/4}\det(\operatorname{Im}C)^{1/4}\exp\left(\frac{i}{2\e}x^TCx\right).
	\end{align}
	In particular, note that $g_0$ is a semiclassically scaled Gaussian wave packet and therefore depends on $\e$, while the Gaussian envelope $g$ does not.
\end{remark}
%

\subsection{Relation to other work}
Recently, Kong et al.~have proposed the \emph{time-sliced thawed Gaussian (TSTG) method} for the propagation of Gaussian wave packets, see \cite{Kong:2016}, and a closer look shows that this method uses a special decomposition of Gaussian wave packets into linear combinations of Gaussian basis functions, which is essentially \eqref{eq:intro_rep1} with Riemann sums on uniform Cartesian grids.
In particular, using that for a sufficiently dense uniform grid of size $\Delta q>0$ the summation curve $S(x)$ can be approximated by $1/\Delta q$ (see Lemma~\ref{fact_estimate_s}), the approximations used by Kong et al.~are of the form
\begin{align*}
	\psi_0(x)
	\approx
	\frac{\Delta q\Delta p}{2\pi\e}\sum_{k=1}^M\sum_{j=1}^N\langle g_{j,k}\mid\psi_0\rangle\,g_{j,k}(x),
\end{align*}
which can be viewed as a discrete version of the FBI formula \eqref{eq:inverse_FBI} resulting from a direct discretisation of the phase space integral.
For a detailed mathematical description of the TSTG method we refer to \cite{Bergold:2022}.

A comparison must also be made with the \emph{fast Gaussian wave packet transform} introduced by Qian and Ying, see \cite{Qian:2010}, who in contrast to Kong et al.~and us work with compactly supported basis functions that approximate Gaussian profiles.
On the one hand-side, this choice leads to a representation for any $L^2$-function based on frame theory, but on the other hand-side no analytical formula is available for the representation coefficients, which is why Qian and Ying propose the fast Fourier transform for the computation of the coefficients.

Finally we note that the FBI inversion formula \eqref{eq:inverse_FBI} has also been used to construct solutions to the semiclassical Schr\"odinger equation.
The major types of such approximations are based on either thawed or frozen evolving Gaussians, corresponding for instance to Gaussian beams \cite{Leung:2009,Zheng:2014} or the Herman--Kluk propagator, see \cite[Section~5]{Lasser:2020}.
\begin{remark}
	We would like to point out that direct discretisations of the phase space integral in \eqref{eq:inverse_FBI} are related to so-called Gabor expansions, see e.g. \cite{Feichtinger:1998} and \cite[Chapter~5]{Grochenig:2001}.
	In particular, it is known that for uniform grids the basis set $\{g_{j,k}\}_{j,k\in\Z}$ is a frame if and only if $\Delta q\Delta p<2\pi\e$, see \cite[Section~3]{Andersson:2002}, which reflects the fact that the completeness of the set depends on the sampling density
	\begin{align*}
		D
		:=\frac{2\pi\e}{\Delta q\Delta p},
	\end{align*}
	where $D<1$ implies that the set is undercomplete and $D>1$ that the set is overcomplete.
	Furthermore, we note that the coefficients as they result from a direct discretisation of the phase space integral are not the exact Gabor coefficients and are obtained without computing the dual frame.
\end{remark}
%

\subsection{Outline}
In \S\ref{sec:2} we derive \eqref{eq:intro_rep0} for arbitrary square-integrable functions $\psi$ based on a partition of unity with Gaussian basis functions and the Fourier inversion formula, see Proposition~\ref{fact:rep1}.
Moreover, we apply our results to the special case where the wave function $\psi$ of interest is a Gaussian wave packet, see Lemma~\ref{fact:rep2}.
Afterwards, in \S\ref{discretisation_via} we discuss the discretisation of the momentum integral via finite and infinite uniform Riemann sums and Gauss--Hermite quadrature.
In particular, the main new result of this paper can be found in Theorem~\ref{fact:final}, where we compare the different variants of the Gaussian wave packet transform.
Finally, we present numerical experiments in \S\ref{numeric} that underline our theoretical results and illustrate the benefits of Gauss--Hermite quadrature in 1D and give a short conclusion in \S\ref{sec:Conclusion and outlook}.

\section{A Wave Packet Representation inspired by the FBI Transform}\label{sec:2}
\subsection{Summation curve}\label{sub:Summation curve}
For a non-empty index set $\Gamma_q\subseteq\Z^d$ and a uniform grid $\{q_k\}_{k\in\Gamma_q}$ in position space, recall the definition of the summation curve $S$ in \eqref{eq:def:S}.
A quick look at the one-dimensional setting in Figure~\ref{fig:figure1} makes it plausible that for a sufficiently small grid spacing $\Delta q>0$ the summation curve can be approximated by a constant value.
%
%
\begin{figure}
	\includegraphics{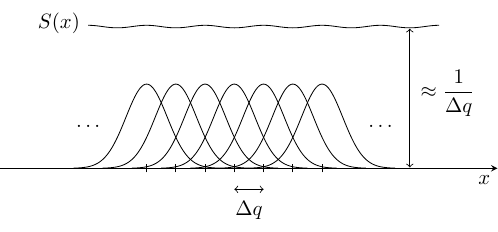}
	\caption{Squared Gaussians $|g_0(x-q_k)|^2$ and the summation curve $S(x)$ on a uniform grid for $d=1$.
		By Lemma~\ref{fact_estimate_s}, $S(x)$ can be approximated by the constant value $1/\Delta q$.}\label{fig:figure1}
\end{figure}
%
%
The next Lemma tells us that this constant equals $1/\Delta q$:
\begin{lemma}\label{fact_estimate_s}
	For $d=1$ consider the Gaussian $g$ in \eqref{def:g} with width parameter $\gamma=\gamma_r+i\gamma_i\in\C,\,\gamma_i>0$.
	Then, for $\Gamma_q=\Z$ and the uniform grid points $q_k=k\Delta q$ with distance $\Delta q>0$, the summation curve has the expansion
	\begin{align}\label{eq:expansion_S}
		S(x)
		=\frac{1}{\Delta q}+\frac{2}{\Delta q}\sum_{n=1}^\infty\cos\left(\frac{2\pi nx}{\Delta q}\right)\exp\left(-\frac{\pi^2 n^2\e}{\gamma_i(\Delta q)^2}\right),
		\quad x\in\R,
	\end{align}
	where the convergence is uniform in $x$.
	In particular, we obtain spectral convergence of the summation curve to $1/\Delta q$ as $\Delta q\to 0$, that is, for all $s\in\N$, there exists a positive constant $c_s>0$, depending on $s,\e$ and $\gamma$, such that
	\begin{align*}
		\left|S(x)-\frac{1}{\Delta q}\right|
		\le c_s\cdot(\Delta q)^{2s-1}
		\quad\text{for all $x\in\R$},
	\end{align*}
	where the constant $c_s$ can be chosen as
	\begin{align*}
		c_s
		=\frac{2s!\gamma_i^s}{\pi^{2s}\e^s}.
	\end{align*}
	Moreover, the summation curve is $\Delta q$-periodic and infinitely differentiable.
\end{lemma}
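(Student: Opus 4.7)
The strategy is to recognise the summation curve as a lattice sum of translates of the Gaussian profile $h(y):=|g_0(y)|^2$ and to apply the Poisson summation formula. Inserting the explicit form of $g_0$ in the one-dimensional case one finds
\begin{align*}
	h(y)=(\pi\e)^{-1/2}\gamma_i^{1/2}\exp\bigl(-\gamma_i y^2/\e\bigr),
\end{align*}
whose Fourier transform (with the convention $\hat h(\xi)=\int h(y)e^{-2\pi i y\xi}\,\mathrm{d}y$) is itself a Gaussian, normalised so that $\hat h(\xi)=\exp(-\pi^2\xi^2\e/\gamma_i)$. Poisson's formula applied to $S(x)=\sum_{k\in\Z}h(x-k\Delta q)$ then yields
\begin{align*}
	S(x)=\frac{1}{\Delta q}\sum_{n\in\Z}\hat h(n/\Delta q)\,e^{2\pi inx/\Delta q},
\end{align*}
and because $\hat h$ is real and even, pairing the $\pm n$ terms produces the cosine series \eqref{eq:expansion_S}. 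Uniform convergence in $x$ is immediate since $|\cos|\le 1$ and the coefficients form a super-exponentially decaying majorant.

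For the spectral estimate I would isolate the $n=0$ contribution (which equals $1/\Delta q$), apply the triangle inequality, and then invoke the elementary bound $e^{-t}\le s!/t^s$ valid for any $t>0$ and $s\in\N$ (it is one term of the Taylor series of $e^t$). With $t=\pi^2n^2\e/(\gamma_i(\Delta q)^2)$ this converts the Gaussian tail into $(\Delta q)^{2s}$ multiplied by $n^{-2s}$; summing over $n\ge 1$ gives the claimed rate $(\Delta q)^{2s-1}$ once the external factor $1/\Delta q$ is taken into account. Controlling the precise constant $c_s$ requires absorbing the convergent series $\sum_{n\ge 1}n^{-2s}$ into the prefactor, and this bookkeeping is the only place where care is needed.

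The remaining structural claims are routine. Periodicity $S(x+\Delta q)=S(x)$ follows directly from \eqref{eq:def:S} via the reindexing $k\mapsto k-1$, and it is also manifest from the cosine expansion. Smoothness is obtained by differentiating \eqref{eq:expansion_S} term by term: each differentiation introduces polynomial factors in $n$, but these are dominated by the Gaussian decay $\exp(-\pi^2n^2\e/(\gamma_i(\Delta q)^2))$, so every derivative series converges absolutely and uniformly and $S\in C^\infty(\R)$ by the Weierstrass M-test. The only genuinely non-trivial step is the invocation of Poisson summation, which is standard since $h$ lies in the Schwartz class.
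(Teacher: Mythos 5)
Your proof follows essentially the same route as the paper's: the expansion that the paper imports from Margrave and Lamoureux (their Appendix A) is precisely the Poisson summation formula applied to the Gaussian $|g_0|^2$, which you rederive from scratch, and your treatment of uniform convergence, periodicity and smoothness coincides with the paper's. The one place where your argument does not deliver the statement as written is the constant $c_s$. Applying $e^{-t}\le s!/t^s$ termwise with $t=\pi^2n^2\e/(\gamma_i(\Delta q)^2)$ and summing over $n\ge 1$ yields
\begin{align*}
	\left|S(x)-\frac{1}{\Delta q}\right|
	\le\frac{2}{\Delta q}\cdot\frac{s!\,\gamma_i^s}{\pi^{2s}\e^s}(\Delta q)^{2s}\,\zeta(2s),
\end{align*}
that is, the claimed $c_s$ multiplied by $\zeta(2s)>1$; the series $\sum_{n\ge 1}n^{-2s}$ cannot be ``absorbed'' into the prefactor without enlarging it. The paper avoids this factor by first using $n^2\ge n$ to dominate the tail by the geometric series $\sum_{n\ge 1}r^n=r/(1-r)=(e^{z}-1)^{-1}$ with $z=\pi^2\e/(\gamma_i(\Delta q)^2)$, and only then invoking $e^{z}-1>z^s/s!$, which produces exactly $c_s=2s!\,\gamma_i^s/(\pi^{2s}\e^s)$. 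Since $\zeta(2s)\le\zeta(2)=\pi^2/6$ for all $s\in\N$, your version still establishes spectral convergence at the stated rate $(\Delta q)^{2s-1}$, so this is a defect only in the precise constant announced in the lemma, not in the substance of the result.
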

We present the proof in Appendix~\ref{sec:summation_curve_appendix}, where we also show that the expansion in \eqref{eq:expansion_S} can be used in higher dimensions, provided that the grid is aligned with the eigenvectors of the imaginary part of the width matrix $C$, which is always assumed in the following.
\begin{remark}
	The main reason why we restrict ourselves to uniform grids aligned with the eigenvectors of the width matrix is that the variants of the wave packet transform that we derive later in \S\ref{discretisation_via} only depend on the grid in momentum space.
	However, unless we explicitly indicate, the following results and estimates do not depend on this specific choice of the position grid.
\end{remark}
The definition of the summation curve $S(x)>0$ in \eqref{eq:def:S} allows to construct the functions $\chi_k(x):=|g_0(x-q_k)|^2/S(x)$, which satisfy the two conditions
\begin{align}\label{eq:partition}
	0<\chi_k(x)\le 1
	\quad\text{and}\quad
	\sum_{k\in\Gamma_q}\chi_k(x)
	=1\quad\text{for all $x\in\R^d$}.
\end{align}
Hence, the family $\{\chi_k\}_{k\in \Gamma_q}$ builds a so-called partition of unity, which typically occur in the theory of manifolds, see e.g. \cite[Chapter~13.1]{Tu:2011}, but are also used for the numerical solution of differential equations, see \cite[Section~4.1.2]{Griebel:2000}.

\subsection{Semi-discrete representation}
In the next step we combine the partition of unity \eqref{eq:partition} in position space with the Fourier inversion formula in momentum space to obtain a semi-discrete decomposition of square-integrable functions.
\begin{proposition}[Semi-discrete representation via summation in position space]\label{fact:rep1}
	For an arbitrary non-empty index set $\Gamma_q\subseteq\Z^d$ and an arbitrary grid $\{q_k\}_{k\in\Gamma_q}$, recall the definition of the summation curve $S(x)$ in \eqref{eq:intro_rep1}.
	Moreover, for a point $z=(q,p)\in\R^{2d}$, recall the definition of the Gaussian wave packet $g_z$ in \eqref{def_gz}, and for a given Schwartz function $\psi\in \mathcal{S}(\R^d)$ let us introduce the interpolant
	\begin{align*}
		x
		\mapsto\I_q(x)
		:=(2\pi\e)^{-d}\int_{\R^d}\langle g_z\mid\psi\rangle\,g_z(x)\,\mathrm{d}p.
	\end{align*}
	Then, for all $x\in\R^d$, we have
	\begin{align}
		\psi(x)
		&=\frac{1}{S(x)}\sum_{k\in\Gamma_q}\I_{q_k}(x)
		\quad\text{and}\label{eq:rep1_1}\\
		\psi(x)
		&=\int_{\R^d}\I_q(x)\,\mathrm{d}q.\label{eq:rep1_2}
	\end{align}
\end{proposition}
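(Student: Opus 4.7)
The plan is to reduce both identities to a single key computation, namely
\[
	\I_q(x) = |g_0(x-q)|^2\,\psi(x)\qquad\text{for every }q,x\in\R^d.
\]
Once this is established, identity \eqref{eq:rep1_1} follows by summing over $k\in\Gamma_q$ and dividing by the summation curve $S(x)$ from \eqref{eq:def:S}, while identity \eqref{eq:rep1_2} follows by integrating over $q\in\R^d$ and using that $g$ has unit $L^2$-norm, so $\int_{\R^d}|g_0(x-q)|^2\,\mathrm{d}q=\|g\|_{L^2}^2=1$.

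To derive the key identity, I would unpack the definition \eqref{def_gz} and write $g_z(x)=g_0(x-q)\,e^{ip\cdot(x-q)/\e}$, which gives
\[
	\langle g_z\mid\psi\rangle=\int_{\R^d}\overline{g_0(y-q)}\,\psi(y)\,e^{-ip\cdot(y-q)/\e}\,\mathrm{d}y = e^{ip\cdot q/\e}\,\widehat{h_q}(p/\e),
\]
where $h_q(y):=\overline{g_0(y-q)}\,\psi(y)$ and $\widehat{h_q}$ denotes its Fourier transform with convention $\widehat{h}(\xi)=\int e^{-i\xi\cdot y}h(y)\,\mathrm{d}y$. Since $g$ is a Gaussian and $\psi\in\mathcal{S}(\R^d)$, the function $h_q$ is Schwartz for every $q$, so $\widehat{h_q}$ is Schwartz as well, and Fourier inversion applies pointwise.

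Substituting this expression into the definition of $\I_q(x)$, pulling the factor $g_0(x-q)$ out of the $p$-integral, and performing the change of variables $\xi=p/\e$ gives
\[
	\I_q(x)=g_0(x-q)\,(2\pi)^{-d}\!\int_{\R^d}\widehat{h_q}(\xi)\,e^{i\xi\cdot x}\,\mathrm{d}\xi = g_0(x-q)\,h_q(x) = |g_0(x-q)|^2\,\psi(x),
\]
which is the claimed identity.

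The only subtle step is the interchange of the $p$- and $y$-integrations implicit in recognising $\I_q(x)$ as a Fourier inversion integral; this I would justify cleanly by keeping the $p$-integration outermost and recognising $\langle g_z\mid\psi\rangle$ as a shifted Fourier transform of the Schwartz function $h_q$, so that the whole calculation is simply Fourier inversion on $\mathcal{S}(\R^d)$ rather than an application of Fubini to an absolutely convergent double integral. After that, identity \eqref{eq:rep1_2} also follows directly from the FBI inversion formula \eqref{eq:inverse_FBI} together with Fubini, which provides a consistency check with the pointwise computation above.
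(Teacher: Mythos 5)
Your proposal is correct and follows essentially the same route as the paper: both rest on recognizing $\langle g_z\mid\psi\rangle$ as an $\e$-scaled Fourier transform of the Schwartz function $\overline{g_0(\cdot-q)}\,\psi$ and applying Fourier inversion. Your isolation of the single key identity $\I_q(x)=|g_0(x-q)|^2\,\psi(x)$ is a mild streamlining — the paper performs the same Fourier-inversion computation twice, once for the sum over the partition of unity and once for the $q$-integral — but the mathematics is identical.
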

For convenience of the proof, we took $\psi\in \mathcal{S}(\R^d)$, but we note that the above representations extend directly to $L^2(\R^d)$.
With regard to the approximation of Schr\"odinger dynamics, we would like to point out that semi-discrete representations with a summation in position space as in \eqref{eq:rep1_1} are also used in the construction of higher-order Gaussian beam approximations, see e.g. \cite[Section~2.1]{Liu:2013}.
\begin{proof}
	We start by proving \eqref{eq:rep1_1}.
	Let $\psi\in \mathcal{S}(\R^d)$ and $g_k(x):=g_0(x-q_k)$, where the semiclassically scaled Gaussian wave packet $g_0$ is defined in \eqref{eq:defg0}.
	According to the properties of the partition of unity $\{\chi_k\}_{k\in\Gamma_q}$ in \eqref{eq:partition}, the function $\psi$ can be decomposed into a sum of ``Gaussian slices'', as follows:
	\begin{align}\label{eq:rep_psi_summation_curve}
		\psi
		=\psi\sum_{k\in\Gamma_q}\chi_k
		=\psi\left(\frac{1}{S}\sum_{k\in\Gamma_q}|g_k|^2\right)
		=\frac{1}{S}\sum_{k\in\Gamma_q}\Big(\psi\overline{g_k}\Big)g_k.
	\end{align}
	In particular, since $g_k\in\mathcal{S}(\R^d)$ for all $k\in\Gamma_q$, we conclude that $\psi\overline{g_k}\in \mathcal{S}(\R^d)$ and therefore, using the Fourier inversion theorem, we obtain
	\begin{align}\label{eq:fourier_inversion2}
		(\psi\overline{g_k})(x)
		=(2\pi\e)^{-d/2}\int_{\R^d} \F_\e\left[\psi\overline{g_k}\right](p)\,e^{ix\cdot p/\e}\,\mathrm{d}p,
		\quad\text{for all $x\in\R^d$},
	\end{align}
	where the $\e$-rescaled Fourier transform is given by
	\begin{align*}
		\F_\e\left[\psi\overline{g_k}\right](p)
		=(2\pi\e)^{-d/2}\int_{\R^d}\psi(x)\overline{g_k(x)}e^{-ip\cdot x/\e}\,\mathrm{d}x.
	\end{align*}
	Furthermore, for all $p\in\R^d$ we get
	\begin{equation}\label{eq:fourier_inversion22}
		\begin{split}
			&\F_\e\left[\psi\overline{g_k}\right](p)\,e^{ix\cdot p/\e}g_k(x)\\
			&\qquad=(2\pi\e)^{-d/2} \int_{\R^d} \psi(y)\overline{g_k(y)}e^{-ip\cdot(y-q_k)/\e}\,\mathrm{d}y\,g_k(x)e^{ip\cdot (x-q_k)/\e}\\
			&\qquad=(2\pi\e)^{-d/2}\langle g_{(q_k,p)}\mid\psi\rangle \,g_{(q_k,p)}(x).
		\end{split}
	\end{equation}
	Consequently, by inserting \eqref{eq:fourier_inversion2} and \eqref{eq:fourier_inversion22} into \eqref{eq:rep_psi_summation_curve}, we conclude that
	\begin{align*}
		\psi(x)
		&=\frac{1}{S(x)}\sum_{k\in\Gamma_q}\I_{q_k}(x).
	\end{align*}
	For proving \eqref{eq:rep1_2} we use the fact that $g_0$ is of unit norm, $\|g_0\|_{L^2(\R^d)}=1$.
	Hence, for all $x\in\R^d$, we get
	\begin{align*}
		\psi(x)
		=\psi(x)\int_{\R^d} |g_0(x-q)|^2\,\mathrm{d}q
		=\int_{\R^d} \Big(\psi(x)\overline{g_0(x-q)}\Big)g_0(x-q)\,\mathrm{d}q.
	\end{align*}
	Thus, again by the Fourier inversion formula, we obtain
	\begin{align*}
		\psi(x)
		&=\int_{\R^d}\left((2\pi\e)^{-d/2}\int_{\R^d}\F_\e[\psi\overline{g_0(\bullet-q)}](p)\,e^{ix\cdot p/\e}\,\mathrm{d}p\right)g_0(x-q)\,\mathrm{d}q\\
		&=\int_{\R^d}\left((2\pi\e)^{-d}\int_{\R^d}\langle g_z\mid\psi\rangle\,g_z(x)\,\mathrm{d}p\right)\,\mathrm{d}q
		=\int_{\R^d}\I_q(x)\,\mathrm{d}q,\notag
	\end{align*}
	which makes the proof complete.
\end{proof}
We notice that \eqref{eq:rep1_1} can be viewed as a semi-discrete inversion formula of \eqref{eq:inverse_FBI}.
In particular, we emphasise that it is an exact representation and not an approximation, as we would get for instance with a direct discretisation.
Indeed, starting from \eqref{eq:rep1_2} and discretising the integral over $\I_q(x)$ with a Riemann sum on a uniform grid of size $\Delta q$ yields the approximation (but not an exact representation)
\begin{align}\label{eq:rep1_3}
	\psi(x)
	\approx\Delta q\sum_{k\in\Z}\I_{	q_k}(x).
\end{align}
The connection between \eqref{eq:rep1_1} and \eqref{eq:rep1_3} is then obtained via Lemma~\ref{fact_estimate_s}, according to which $\Delta q\approx 1/S(x)$ for a small grid spacing $\Delta q$.

\subsection{A representation for Gaussian wave packets}
In the next step we focus on the special case where the function $\psi$ is a Gaussian wave packet with the same semiclassical scaling as $g_z$ and therefore the inner product $\langle g_z\mid \psi\rangle$ can be calculated analytically.
As we have already mentioned in the introduction, approximations based on analytical wave packet coefficients enable the efficient implementation of algorithms for solving dispersive equations in the semiclassical regime such as the time-dependent Schr\"odinger equation, which is the main reason why we focus on this special case.
We note that non-Gaussian functions are also possible and have been studied by other authors, but for this case no explicit analytical representations are available and therefore the integrals have to be calculated numerically, for example with the fast Fourier transform, see \cite[Algorithm~3.3]{Qian:2010}.
\begin{lemma}\label{fact:rep2}
	For matrices $C,C_0\in\mathfrak{S}^+(d)$ and phase space points $z,z_0\in\R^{2d}$, let $g_z=g_{z}^{C,\e}$ and $\psi_0:=g_{z_0}^{C_0,\e}$ be two Gaussian wave packets with the same semiclassical scaling.
	Moreover, let us introduce the parameters
	\begin{align*}
		A
		&:=i(C_0-\overline{C})^{-1},\quad
		b_q(x)
		:=x-q-iAC_0(q-q_0),\quad\text{and}\\[1mm]
		c_q(x)
		&:=\frac{\det(\operatorname{Im}C\operatorname{Im}C_0)^{1/4}}{(\pi\e)^d\sqrt{2^d\det(A^{-1})}}\cdots\\
		&\qquad\exp\left(-\frac{1}{2\e}(q-q_0)^T\overline{C}AC_0(q-q_0)+\frac{i}{\e}p_0^T(x-q_0)\right).
	\end{align*}
	Then, for all $x\in\R^d$, the interpolant 
	\begin{align*}
		\I_{q}(x)
		=(2\pi\e)^{-d}\int_{\R^d}\langle g_z\mid\psi_0\rangle\,g_z(x)\,\mathrm{d}p
	\end{align*}
	of the Gaussian wave packet $\psi_0$ can be written as
	\begin{align}\label{eq:relation_f}
		\I_q(x)
		=g_0(x-q)c_q(x)\int_{\R^d}\exp\left(-\frac{1}{2\e}p^TAp+\frac{i}{\e}b_q(x)^Tp\right)\,\mathrm{d}p.
	\end{align}
	In particular, the integral in \eqref{eq:relation_f} exists, because $\operatorname{Re}(A)>0$.
\end{lemma}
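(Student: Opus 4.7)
The plan is to reduce the computation to a single complex Gaussian integral. Since both $g_z$ and $\psi_0$ are Gaussians, $\langle g_z\mid\psi_0\rangle$ is itself a complex Gaussian integral in the integration variable $y$; once evaluated it yields an explicit Gaussian function of $p$, which I multiply by $g_z(x)=g_0(x-q)\exp(\tfrac{i}{\e}p^T(x-q))$ and leave under the $p$-integral. The shape of $b(x)$ in the statement hints that a change of variables $\tilde p:=p-p_0$ is what aligns the remaining integrand with the stated canonical form.

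\emph{Step 1 (inner product via $y$-integration).} Expanding $\overline{g_z(y)}\psi_0(y)$ and shifting to $y':=y-q$ with $\delta:=q-q_0$, the $y'$-quadratic coefficient is $\tfrac{i}{2\e}(C_0-\overline C)$. The defining relation $A=i(C_0-\overline C)^{-1}$ equivalently reads $C_0-\overline C=iA^{-1}$, so this coefficient becomes $-\tfrac{1}{2\e}A^{-1}$. Since $\operatorname{Re}(A^{-1})=\operatorname{Im}(C_0)+\operatorname{Im}(C)>0$, the $y'$-integral is a convergent complex Gaussian; completing the square yields the factor $(2\pi\e)^{d/2}/\sqrt{\det(A^{-1})}$ together with the exponential of $-\tfrac{1}{2\e}(C_0\delta+p_0-p)^T A(C_0\delta+p_0-p)$ and residual constants $\tfrac{i}{2\e}\delta^T C_0\delta+\tfrac{i}{\e}p_0^T\delta$.

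\emph{Step 2 (shift in $p$ and extraction of $b(x)$).} Multiplying by $g_z(x)$, substituting $\tilde p:=p-p_0$, and expanding, the $\tilde p$-quadratic part becomes $-\tfrac{1}{2\e}\tilde p^T A\tilde p$, while the $\tilde p$-linear part simplifies to $\tfrac{1}{\e}\tilde p^T[AC_0\delta+i(x-q)]=\tfrac{i}{\e}\tilde p^T b(x)$ with $b(x)=(x-q)-iAC_0\delta$, after the $Ap_0$ cross terms cancel. Relabeling $\tilde p\to p$ displays exactly the integrand of the statement; convergence is the assertion $\operatorname{Re}(A)>0$, which follows from $\operatorname{Im}(C),\operatorname{Im}(C_0)>0$ via the standard Siegel half-space property.

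The principal obstacle is the bookkeeping in the $p$-independent residue, which must assemble into $c(x)$. After the $\tilde p$-shift, the $p_0$-quadratic contributions cancel identically, while the $p_0$-linear terms combine as $\tfrac{i}{\e}p_0^T\delta+\tfrac{i}{\e}p_0^T(x-q)=\tfrac{i}{\e}p_0^T(x-q_0)$, matching the phase in $c(x)$. The $\delta$-quadratic residue $\tfrac{i}{2\e}\delta^T C_0\delta-\tfrac{1}{2\e}\delta^T C_0 A C_0\delta$ collapses to $-\tfrac{1}{2\e}\delta^T\overline C AC_0\delta$ through the matrix identity $iC_0=(C_0-\overline C)AC_0$, which is immediate from $A(C_0-\overline C)=iI$. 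Finally, the scalar prefactor $(2\pi\e)^{-d}\cdot(\pi\e)^{-d/2}\cdot(2\pi\e)^{d/2}\cdot\det(\operatorname{Im}C\operatorname{Im}C_0)^{1/4}/\sqrt{\det(A^{-1})}$ collapses to $\det(\operatorname{Im}C\operatorname{Im}C_0)^{1/4}/[(\pi\e)^d\sqrt{2^d\det(A^{-1})}]$, the prefactor of $c(x)$, closing the argument.
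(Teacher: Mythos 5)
Your proposal is correct and follows essentially the same route as the paper: both reduce $\I_q(x)$ to the stated canonical form by writing the overlap $\langle g_z\mid\psi_0\rangle$ as an explicit Gaussian in $p$, multiplying by $g_z(x)$, and shifting $p\mapsto p-p_0$, with your identity $iC_0=(C_0-\overline{C})AC_0$ playing the role of the paper's $(C_0^{-1}-\overline{C}^{-1})^{-1}=i\overline{C}AC_0$. The only structural difference is that you derive the overlap formula by direct $y$-integration whereas the paper imports it from \cite[Lemma 3.2]{Bergold:2021to}; your bookkeeping --- the prefactor $\det(\operatorname{Im}C\operatorname{Im}C_0)^{1/4}/\bigl((\pi\e)^d\sqrt{2^d\det(A^{-1})}\bigr)$, the phase $\tfrac{i}{\e}p_0^T(x-q_0)$, the collapse of the $\delta$-quadratic residue to $-\tfrac{1}{2\e}\delta^T\overline{C}AC_0\delta$, and the positivity $\operatorname{Re}(A)>0$ via the Siegel half-space property --- all checks out.
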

\begin{proof}
	In \cite[Lemma~1]{Bergold:2022} we present a formula for two Gaussian wave packets, which shows that
	\begin{align}\label{eq:inner1}
		\langle g_z\mid\psi_0\rangle
		=\beta(z)\exp
		\left(\frac{i}{2\e}(z-z_0)^TM(z-z_0)\right),
	\end{align}
	where the matrix
	\begin{align*}
		M
		:=
		\begin{pmatrix}
			\left(C_0^{-1}-\overline{C}^{-1}\right)^{-1} & 0\\
			0 & -(C_0-\overline{C})^{-1}
		\end{pmatrix}
		\in\C^{2d\times 2d}
	\end{align*}
	is an element of the Siegel upper half-space of $2d\times 2d$ matrices and the complex constant $\beta(z)\in\C$ is given by
	\begin{align*}
		\beta(z)
		&:=\frac{2^{d/2}\det(\operatorname{Im}C\operatorname{Im}C_0)^{1/4}}{\sqrt{\det(A^{-1})}}\exp\left(\frac{i}{2\e}(p+p_0)^T(q-q_0)\right)\cdots\\
		&\qquad\exp\left(\frac{1}{2\e}(p-p_0)^TA(C_0+\overline{C})(q-q_0)\right).
	\end{align*}
	Hence, the formula for the inner product in \eqref{eq:inner1} yields
	\begin{equation}\label{eq:rel_inner_f}
		\begin{split}
			&(2\pi\e)^{-d}\langle g_z\mid\psi_0\rangle\,g_z(x)\\
			&\qquad=(2\pi\e)^{-d}\beta(z)\,g_0(x-q)\exp\left(\frac{i}{2\e}(z-z_0)^TM(z-z_0)+\frac{i}{\e}p^T(x-q)\right)\\
			&\qquad=g_0(x-q)c_q(x)\exp\left(-\frac{1}{2\e}(p-p_0)^TA(p-p_0)+\frac{i}{\e}b_q(x)^T(p-p_0)\right),
		\end{split}
	\end{equation}
	where we rearranged terms only by simple algebraic manipulations and used the fact that $(C_0^{-1}-\overline{C}^{-1})^{-1}=i\overline{C}AC_0$.
	The representation in \eqref {eq:relation_f} therefore follows from a linear transformation of the integral.
	Finally, since $Z\in\mathfrak{S}^+(d)$ implies $-Z^{-1}\in\mathfrak{S}^+(d)$ (see e.g. \cite[Theorem~4.64]{Folland:1989}), we conclude that $\operatorname{Re}(A)>0$.
\end{proof}
By combining Proposition~\ref{fact:rep1} and Lemma~\ref{fact:rep2}, we obtain a representation of the Gaussian wave packet $\psi_0$ for all $x\in\R^d$, as follows:
\begin{align}\label{eq:rep_new}
	\psi_0(x)
	=\frac{1}{S(x)}\sum_{k\in\Gamma_q}\left(g_0(x-q_k)c_k(x)\int_{\R^d}f_{k,x}(p)\,\mathrm{d}p\right),
\end{align}
where we have introduced the function
\begin{align}\label{eq:integrad_f1}
	f_{k,x}(p)
	:=\exp\left(-\frac{1}{2\e}p^TAp+\frac{i}{\e}b_k(x)^Tp\right),
	\quad p\in\R^d,
\end{align}
and we write $b_k$ and $c_k$ to indicate that in the definition of $b_q$ and $c_q$ we have replaced the variable $q$ with $q_k$.
At first glance, the representation in \eqref{eq:rep_new} might appear unfinished, as it still contains a Gaussian integral, which could be solved by hand.
Let us briefly discuss in the one-dimensional case, how one uses the representation:\\

Since $f_{k,x}$ is a Gaussian centered at $p=0$, we can use a uniform grid $\{p_j\}_{j=1}^N$ on some finite interval $[p_0-L_p,p_0+L_p]$ to obtain the approximation
\begin{align}\label{eq:approx1d}
	\int_{\R}f_{k,x}(p)\,\mathrm{d}p
	\approx{\int_{p_0-L_p}^{p_0+L_p}f_{k,x}(p-p_0)\,\mathrm{d}p}
	\approx\sum_{j=1}^N f_{k,x}(p_j-p_0)\,\Delta p,
\end{align}
and therefore, combining \eqref{eq:rel_inner_f} with the approximation in \eqref{eq:approx1d}, we arrive at (recall the short-hand notation $g_{j,k}:=g_{(q_k,p_j)}$)
\begin{align}\label{eq:fbi_recS}
	\psi_0(x)
	\approx\frac{1}{S(x)}\sum_{k\in\Gamma_q}\sum_{j=1}^N\left(\frac{\Delta p}{2\pi\e}\langle g_{j,k}\mid\psi_0 \rangle\right)g_{j,k}(x).
\end{align}
In contrast, a direct discretisation of the phase space integral in \eqref{eq:inverse_FBI} with uniform Riemann sums in position and momentum space gives
\begin{align}\label{eq:fbi_gabor2}
	\psi_0(x)
	\approx
	\frac{\Delta q\Delta p}{2\pi\e}\sum_{k=1}^M\sum_{j=1}^N\langle g_{j,k}\mid\psi_0\rangle\,g_{j,k}(x).
\end{align}
Consequently, we recognise that the discretisation of
\begin{align}\label{eq:fourier_integral}
	\int_{\R^d}f_{k,x}(p)\,\mathrm{d}p
\end{align}
can be used to obtain a representation that can be viewed as a discrete version of the FBI formula \eqref{eq:inverse_FBI}.
\begin{remark}
	Note that the right-hand side of \eqref{eq:fbi_gabor2} is a pure linear combination of the basis functions $g_{j,k}$, whereas \eqref{eq:fbi_recS} is not, since the sum is rescaled by $1/S(x)$.
	In fact, \eqref{eq:fbi_gabor2} can be seen as a special case of \eqref{eq:fbi_recS}, since according to Lemma~\ref{fact_estimate_s} we have $1/S(x)\approx\Delta q$ for a sufficiently dense uniform grid $\{q_k\}_{k\in\Gamma_q}$.
	However, we point out that in \eqref{eq:fbi_recS} it is possible to use any grid in position space.
	Although from a computational point of view the first attempt might be to minimise the number of basis functions and therefore a single grid point would be optimal, the approximation $1/S(x)\approx\Delta q$ can be very useful in practice as it allows linear operations on the original wave packet $\psi_0$ to be transferred directly to the basis functions.
\end{remark}
The next section deals with the discretisation of the integral in \eqref{eq:fourier_integral} via different quadrature rules, the aim being to find a rule for which the number of grid points can be kept small.
We will see that Gauss--Hermite quadrature is a good candidate, because the integrand $f_{k,x}$ is a Gaussian.

\section{Discretisation via different Quadrature Rules}\label{discretisation_via}
The discretisation of the integral \eqref{eq:fourier_integral} yields different approximations of the wave packet $\psi_0=g_{z_0}^{C_0,\e}$ in terms of the Gaussian basis functions $g_{j,k}$.
To compare the corresponding approximation errors, we derive error bounds based on\\[-2mm]
\begin{enumerate}
	\item[(RS)] infinite Riemann sums on uniform grids\\[-3mm]	
	\item[(TcM)] a truncation combined with a discretisation of the truncated integral by the (composite) midpoint rule\\[-3mm]
	\item[(GH)] Gauss--Hermite quadrature\\[-2mm]
\end{enumerate}
Discretisations based on TcM were already used by Kong et al.~for the TSTG method, but to the best of our knowledge it is the first time that Gauss--Hermite quadrature is used to derive a new variant of the Gaussian wave packet transform.
In particular, Gauss--Hermite quadrature can be treated efficiently in high dimensions by sparse-grids, see \S\ref{sub:h_dim_quadrature}.\\

Depending on the underlying quadrature rule, we choose for\\[-2mm]
\begin{description}
	\item[RS] The infinite uniform grid of size $\Delta p>0$ defined by 
		\begin{align}\label{eq:def_gridRS}
			p_{j,n}
			=p_{0,n}+j_n\cdot\Delta p,\quad
			j\in\Gamma_p^{\operatorname{(RS)}}
			=\Z^d.
		\end{align}
	\item[TcM] The finite uniform grid defined by
		\begin{align}\label{eq:def_gridTCM}
			p_{j,n}
			=p_{0,n}-L_p+\frac{2j_n-1}{2}\cdot\Delta p
			\quad j\in\Gamma_p^{\operatorname{(TcM)}}
			=\{1,\dots,N\}^d,
		\end{align}
		with grid size $\Delta p=2L_p/N$, which discretises the square box
		\begin{align*}
			\Lambda_p
			:=\prod_{n=1}^d[p_{0,n}-L_p,p_{0,n}+L_p]\subset\R^d
		\end{align*}
		of length $2L_q$ in momentum space.\\[-2mm]
	\item[GH] The finite (non-uniform) grid
		\begin{align*}
			p_{j,n}
			=p_{0,n}+s_{j_n}\sqrt{2\e},
			\quad j\in\Gamma_p^{\operatorname{(GH)}}
			=\{1,\dots,N\}^d,
		\end{align*}
		depending on the zeros $s_{j_n}$ of the $N$th Hermite polynomial, see \S\ref{sub:rep_Gauss_Hermite}.\\
\end{description}
The different discretisations then result in approximations of the form
\begin{align*}
	\psi_{\operatorname{rec}}(x)
	:=\frac{1}{S(x)}\sum_{k\in\Gamma_q}\sum_{j\in\Gamma_p^{(\operatorname{rule})}}r_{j,k}^{(\operatorname{rule})}\,g_{j,k}(x),
\end{align*} 
where the corresponding representation coefficients can be calculated analytically and are given by\\[-2mm]
\begin{description}
	\item[For RS and TcM]
		\begin{align*}
			r^{\operatorname{(TcM)}}_{j,k}
			=r^{\operatorname{(RS)}}_{j,k}
			=\frac{(\Delta p)^d}{(2\pi\e)^d}\langle g_{j,k}\mid\psi_0\rangle.
		\end{align*}
	\item[For GH]
		\begin{align}\label{eq:def_coefGH}
			r^{(\operatorname{GH})}_{j,k}
			=\frac{w_{j_1}\cdots w_{j_d}}{(2\pi\e)^d}\langle g_{j,k}\mid\psi_0\rangle,
		\end{align}
		depending on the weights $w_{j_n}$ of the Gauss--Hermite rule, see \S\ref{sub:rep_Gauss_Hermite}.\\
\end{description}
The difference between the representation coefficients based on Riemann sums and Gauss--Hermite quadrature is the spacing of the grid points $p_j$ on the one hand, and the different weighting of the inner product $\langle g_{j,k}\mid\psi_0\rangle$ on the other.
Moreover, note that for TcM we must choose a suitable truncation box, whereas the grid points are distributed ``optimally'' for GH.\\

Equipped with the different grids in momentum space, we are left with the choice for the grid in position space.
Since $\psi_0=g_{z_0}^{C_0,\e}$ has a Gaussian envelope and therefore the values $|\psi_0(x)|$ quickly fall to zero as $|x|\to\infty$, we are only interested in an approximation for values $x$ in some neighborhood of the center $q_0$ in position space, e.g. the square box
\begin{align*}
	\Lambda_x
	:=\prod_{n=1}^d[q_{0,n}-L_q,q_{0,n}+L_q]\subset\R^d
\end{align*}
of length $2L_q$ in each coordinate direction.
For a multi-index $k\in\Gamma_q=\{1,\dots,M\}^d$, we consider the uniform grid on $\Lambda_x$ defined by
\begin{align}\label{eq:grid_in_position}
	q_{k,n}
	=q_{0,n}-L_q+\frac{2k_n-1}{2}\cdot\Delta q,
\end{align}
where $\Delta q=2L_q/M$. As we have already mentioned in \S\ref{sub:Summation curve}, we focus on uniform grids in position space that are aligned with the eigenvectors of the width matrix of the basis functions. 
More precisely, if $U\in\R^{d\times d}$ is the orthogonal matrix such that $\operatorname{Im}{C}=UDU^T$ is an eigendecomposition of the symmetric and positive definite imaginary part, we work on $U\Lambda_x$ with corresponding grid points $Uq_k$.
However, to keep the notation simple, in the following we will write $\Lambda_x$ and $q_k$ and assume the action of the matrix $U$ implicitly.\\

We are now ready to present the different approximation errors:
\begin{theorem}[Gaussian wave packet transform via quadrature rules]\label{fact:final}\hfill\\
	For matrices $C,C_0\in\mathfrak{S}^+(d)$ and phase space points $z,z_0\in\R^{2d}$, let $g_z=g_{z}^{C,\e}$ and $\psi_0:=g_{z_0}^{C_0,\e}$ be two Gaussian wave packets with the same semiclassical scaling.
	Moreover, for the uniform grid $\{q_k\}_{k\in\Gamma_q}$ on the box $\Lambda_x$ defined in \eqref{eq:grid_in_position}, recall the definition of the summation curve $S(x)$ in \eqref{eq:intro_rep1} and let
	\begin{align}\label{eq:rec_main}
		\psi_{\operatorname{rec}}(x)
		=\frac{1}{S(x)}\sum_{k\in\Gamma_q}\sum_{j\in\Gamma_p^{(\operatorname{rule})}}r_{j,k}^{(\operatorname{rule})}\,g_{j,k}(x)
	\end{align}
	be the reconstruction based on the rules $\operatorname{TcM},\operatorname{RS}$ and $\operatorname{GH}$ with corresponding grid points $p_j\in\Lambda_p$ and coefficients $r_{j,k}^{(\operatorname{rule})}$ defined in \eqref{eq:def_gridTCM}-\eqref{eq:def_coefGH} as well as $g_{j,k}=g_{(q_k,p_j)}$.
	Then, the approximation errors
	\begin{align*}
		E^{(\operatorname{rule})}
		:=\sup_{x\in\Lambda_x}\left|\psi_0(x)-\psi_{\operatorname{rec}}(x)\right|
	\end{align*}
	satisfy the following bounds:\\[-2mm]
	\begin{description}
		\item[Infinite Riemann sums] For all $s\ge 1$, there exists a positive constant\linebreak $C_s^{(\operatorname{RS)}}>0$ such that
			\begin{align}\label{eq:mainRS}
				E^{(\operatorname{RS})}
				<C_s^{(\operatorname{RS)}}\cdot(\Delta p)^{2s+1}.
			\end{align}
		\item[Truncation and composite midpoint rule] There exists a positive constant $C^{(\operatorname{T)}}>0$ such that for all $s\ge 1$ we have
			\begin{align*}
				E^{(\operatorname{TcM})}
				<C^{(\operatorname{T)}}+C_s^{(\operatorname{RS)}}\cdot(\Delta p)^{2s+1},
			\end{align*}
			where $C_s^{(\operatorname{RS)}}>0$ is the constant in \eqref{eq:mainRS}.\\[1mm]
		\item[Gauss--Hermite quadrature] For all $s\ge 1$, there exists a positive constant $C_s^{(\operatorname{GH)}}>0$ such that
			\begin{align*}
				E^{(\operatorname{GH})}
				<C_s^{(\operatorname{GH})}\cdot N^{-s/2}.
			\end{align*}
	\end{description}
	In particular, the constants $C_s^{(\operatorname{RS)}},C^{(\operatorname{T)}}$ and $C_s^{(\operatorname{GH})}$ can be chosen independently of the number $M\ge 1$ of grid points in position space and the total number of grid points in momentum space is given by $N_{d}=N^d$ and thus we have $N^{-s/2}=N_d^{-s/2d}$.
\end{theorem}
The proof is presented later in \S\ref{sub:Proof of the final result} and is based on the error estimates for the individual quadrature rules.
For a detailed discussion of the dependence of the constants $C_s^{(\operatorname{RS)}},C^{(\operatorname{T)}}$ and $C_s^{(\operatorname{GH})}$ on the semiclassical parameter $\e$ we refer to Remark~\ref{remark_dependence}.
\begin{remark}
	We note that the approximation of one-dimensional Gaussian states according to Theorem~\ref{fact:final} (TcM) can be found in \cite{Kong:2016}, but in contrast to our basis functions, the authors work with the non-normalised functions
	\begin{align*}
		\phi_{j,k}(x)
		:=\phi(x-q_k)e^{ip_j(x-q_k)},\quad
		\phi(x)
		:=\frac{\sqrt{\Delta q}}{\sqrt{\pi}}\frac{\sigma}{2}\exp\left(-\frac{\sigma^2}{4}x^2\right),
	\end{align*}
	where $\sigma>0$ is chosen such that $\sigma/2=\Delta p$.
	This yields the equivalent approximation
	\begin{align*}
		\psi_0(x)
		\approx
		\frac{1}{\sqrt{2\pi}}\sum_{k=1}^M\sum_{j=1}^N\langle\phi_{j,k}\mid\psi_0\rangle\phi_{j,k}(x).
	\end{align*}
	To the best of our knowledge, our error estimate is the first one to apply for the Gaussian representation introduced by Kong et al..
\end{remark}
We now continue with the analysis of the different discretisation errors.

\subsection{Infinite Riemann sums and composite midpoint rule}
Error bounds for fully tensorised quadrature rules can be derived from the one-dimensional theory by applying a given one-dimensional formula to each variable separately.
The following lemma, formulated as a special variant of a more general result in \cite[\S3]{Haber:1970}, will be a useful tool for the analysis of multivariate quadrature, allowing us to derive error bounds for fully tensorised quadrature rules from univariate estimates.
\begin{lemma}\label{fact:quadrature_dto1}
	Consider the one-dimensional $N$-point quadrature formula
	\begin{align*}
		Q_Nf
		:=\sum_{j=1}^Nw_jf(p_j)
		\approx\int_0^1f(p)\,\mathrm{d}p
	\end{align*}
	using the non-negative weights $w_j\ge 0,\,\sum_{j=1}^Nw_j=1$, and abscissas $p_j\in[0,1]$.
	Moreover, consider the corresponding ``Cartesian product'' formula
	\begin{align*}
		Q_N^df
		=\left(Q_N\otimes\cdots\otimes Q_N\right)f
		:=\sum_{j_1=1}^N\cdots\sum_{j_d=1}^Nw_{j_1}\cdots w_{j_d}f(p_{1,j_1},\dots,p_{d,j_d})
	\end{align*}
	and assume that for all $n\in\{1,\dots,d\}$ there exists a non-negative constant $E_n\ge 0$, such that
	\begin{align*}
		\left|\int_0^1f(p_1,\dots,p_d)\,\mathrm{d}p_n-Q_N(f;p_n)\right|
		\le E_n
	\end{align*}
	for all values of $p_m\in[0,1],\,m\ne n$.
	Then,
	\begin{align*}
		\left|\int_{[0,1]^d}f(p)\,\mathrm{d}p-Q_N^df\right|
		\le\sum_{n=1}^dE_n.
	\end{align*}
\end{lemma}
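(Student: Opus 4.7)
The plan is a standard telescoping argument that swaps one integration for its quadrature approximation at a time, exploiting that both the integral over $[0,1]$ and the one-dimensional rule $Q_N$ (whose weights are non-negative and sum to one) are contractions in the supremum norm.

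First I would fix notation. For each $n\in\{1,\dots,d\}$, let $I_n$ denote integration over $[0,1]$ in the variable $p_n$ with the remaining variables held fixed, and let $Q_n$ denote application of the one-dimensional rule $Q_N$ in the variable $p_n$ with the remaining variables held fixed. Because these operators act on different coordinates they commute, and Fubini's theorem together with the definition of the Cartesian product formula give
\begin{align*}
	\int_{[0,1]^d}f(p)\,\mathrm{d}p = I_1 I_2\cdots I_d f
	\qquad\text{and}\qquad
	Q_N^d f = Q_1 Q_2\cdots Q_d f.
\end{align*}

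Next I would write down the telescoping identity
\begin{align*}
	I_1\cdots I_d - Q_1\cdots Q_d
	= \sum_{n=1}^d Q_1\cdots Q_{n-1}\,(I_n - Q_n)\,I_{n+1}\cdots I_d,
\end{align*}
which is obtained by inserting and cancelling the intermediate hybrid operators $Q_1\cdots Q_{n-1}I_n\cdots I_d$ for $n=1,\dots,d+1$. For the $n$-th summand, the hypothesis supplies the uniform bound $|(I_n-Q_n) h|\le E_n$ whenever $h$ is a function of $p_1,\dots,p_d$ in the class under consideration; applied to $h = I_{n+1}\cdots I_d f$, this yields a function of the remaining variables whose supremum over $[0,1]^{d-1}$ is at most $E_n$. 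It then remains to argue that the prefactor $Q_1\cdots Q_{n-1}$, acting on this function, cannot increase its supremum, after which absolute values and the triangle inequality give the stated bound $\sum_n E_n$.

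The only substantive point is therefore the contraction property of $Q_m$ on $L^\infty$: because the weights $w_j$ are non-negative and $\sum_j w_j = 1$, the operator $Q_m$ is a convex combination of evaluation functionals in the $m$-th coordinate and hence satisfies $\|Q_m h\|_\infty \le \|h\|_\infty$ for every bounded $h$; the same argument shows that each $I_m$ is a contraction. Iterating this observation through the prefix $Q_1\cdots Q_{n-1}$ (and absorbing $I_{n+1}\cdots I_d$ into $h$ before applying the hypothesis) completes the proof. I do not expect any genuine obstacle here — the argument is purely algebraic once the contraction property of averaging operators is isolated — so the main care is simply to apply the hypothesis to the correct intermediate function at each step.
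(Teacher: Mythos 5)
Your argument is correct. Note that the paper itself does not prove this lemma at all: it simply declares it a special case of a result in Haber's survey (SIAM Review, 1970, \S 3) and moves on, so you are supplying the standard argument that the citation points to. Your telescoping decomposition
\begin{align*}
	I_1\cdots I_d - Q_1\cdots Q_d
	= \sum_{n=1}^d Q_1\cdots Q_{n-1}\,(I_n - Q_n)\,I_{n+1}\cdots I_d
\end{align*}
together with the observation that both $I_m$ and $Q_m$ are sup-norm contractions (the latter because $w_j\ge 0$ and $\sum_j w_j=1$) is exactly the classical proof of the product-rule error bound, and every step goes through. One small point worth tightening: the hypothesis is stated for $f$ itself, with the variables $p_m$, $m\ne n$, held fixed, not for the partially integrated function $h=I_{n+1}\cdots I_d f$. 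You cannot literally ``apply the hypothesis to $h$''; instead, commute to write $(I_n-Q_n)h = I_{n+1}\cdots I_d\bigl[(I_n-Q_n)f\bigr]$, bound the inner bracket pointwise by $E_n$ using the hypothesis as stated, and then use that $I_{n+1}\cdots I_d$ integrates over a set of measure one. You clearly have this mechanism in hand (you note that the $I_m$ are contractions), so this is a matter of phrasing rather than a gap.
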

Recall the definition of the integrand $f_{k,x}$ in \eqref{eq:integrad_f1}.
We conclude that $A=i(C_0-\overline{C})^{-1}$ is a real symmetric and positive definite matrix and $b_k(x)$ is a real-valued vector if both the matrices $C,C_0\in\mathfrak{S}^+(d)$ are purely imaginary.
Hence, in this case, the absolute value of the integrand is given by
\begin{align*}
	\left|f_{k,x}(p)\right|
	=\exp\left(-\frac{1}{2\e}p^TAp\right)
\end{align*}
and a linear transformation of the integral can be used to generate a Gaussian of unit width.
More precisely, using the Cholesky decomposition $A=LL^T$, where $L\in\R^{d\times d}$ is a lower triangular matrix with positive diagonal entries, we obtain
\begin{align*}
	\int_{\R^d}f_{k,x}(p)\,\mathrm{d}p
	=\det(A)^{-1/2}\int_{\R^d}\exp\left(-\frac{1}{2\e}|y|^2+\frac{i}{\e}(L^{-1}b_k(x))^Ty\right)\,\mathrm{d}y.
\end{align*}
In the general case $C,C_0\in\mathfrak{S}^+(d)$, a linear transformation of the integral also leads to a Gaussian unit width, but with additional transformations for the imaginary parts $\operatorname{Im}A$ and $\operatorname{Im}b_k(x)$.
To keep calculations simple, we therefore assume from now on that the integrand is given directly in the form
\begin{align}\label{eq:fkx_reduced}
	f_{k,x}(p)
	=\exp\left(-\frac{1}{2\e}|p|^2+\frac{i}{\e}b_k(x)^Tp\right),
\end{align}
where $b_k(x)$ is real.
We note that all of the following estimates can be extended to the general case and result in similar, albeit technically more involved, calculations.\\

Since the error bounds $E_n$ of the one-dimensional quadrature rules crucially depend on the smoothness of the integrand (in our case $f_{k,x}$), it will be useful to have a formula for bounds of higher order derivatives:
\begin{lemma}
	For all $s\ge 1$, there exists a positive constant $c_s>0$, depending on $s,L_q$ and $\e$, such that for all $k\in\Gamma_q$ and $x\in\Lambda_x$ the $L^1$ norm of the partial derivatives of the function $f_{k,x}$ in \eqref{eq:fkx_reduced} is given by
	\begin{align}\label{eq:df^2_l1}
		\int_{\R}\left|\partial_n^sf_{k,x}(p)\right|\,\mathrm{d}p_n
		\le c_s
	\end{align}
	for all values of $p_m\in\R,\,m\ne n$.
	In particular, $c_s=\mathcal{O}\left(\e^{-s+1/2}\right).$
\end{lemma}
\begin{proof}
	For $k\in\Gamma_q,\,x\in\Lambda_x$ and $n\in\{1,\dots,d\}$ let us introduce the complex-valued univariate functions
	\begin{align}\label{eq:def_fkxn}
		f_{k,x,n}(\xi)
		:=\exp\left(-\frac{1}{2\e}\xi^2+\frac{i}{\e}b_{k,n}(x)\xi\right),
		\quad\xi\in\R,
	\end{align}
	where $b_{k,n}(x)$ denotes the $n$th component of the real-valued vector $b_k(x)$.
	Since
	\begin{align*}
		f_{k,x}(p)
		=\prod_{n=1}^df_{k,x,n}(p_n),
	\end{align*}
	we conclude that for all $s\ge 1$ the derivatives of $f_{k,x}$ can be bounded, as follows:
	\begin{align*}
		\left|\partial_n^sf_{k,x}(p)\right|
		\le|f_{k,x,n}^{(s)}(p_n)|,\quad
		p\in\R^d
	\end{align*}
	and therefore it suffices to find a bound for the derivatives of the Gaussian $f_{k,x,n}$, uniformly in $k,x$ and $n$.
	As presented in \cite[Eq.~(13)]{Dattoli:2000}, for $\alpha,\beta\in\C,\alpha\ne 0$, the $s$th derivative of the exponential function
	\begin{align*}
		g(\xi)
		:=\exp\left(\alpha \xi^2+\beta\xi\right)
	\end{align*}
	can be expressed in terms of the second order Kamp\'{e} de F\'{e}ri\'{e}t polynomial as
	\begin{align*}
		g^{(s)}(\xi)
		=g(\xi)\,s!\sum_{m=0}^{\lfloor s/2\rfloor}\frac{\alpha^m(2\alpha\xi+\beta)^{s-2m}}{m!\,(s-2m)!}.
	\end{align*}
	Hence, if we chose $\alpha=-1/2\e$ and $\beta=ib_{k,n}(x)/\e$, we conclude that $g(\xi)=f_{k,x,n}(\xi)$ and thus we get
	\begin{align}\label{eq:boundKampe}
		|f_{k,x,n}^{(s)}(\xi)|
		\le e^{-\xi^2/2\e}\,s!\sum_{m=0}^{\lfloor s/2\rfloor}\frac{\left(|\xi|+2L_q\sqrt{d}\right)^{s-2m}}{2^m\e^{s-m}m!\,(s-2m)!},
	\end{align}
	where we used that
	\begin{align}\label{eq:estimate_B2}
		\sup_{x\in\Lambda_x}\left|b_{k,n}(x)\right|
		=\frac{1}{2}\sup_{x\in\Lambda_x}\big(|x-q_0)|+|x-q_k|\big)
		\le 2L_q\sqrt{d}.
	\end{align}
	Using the binomial theorem, we further obtain
	\begin{align*}
		\int_{\R}|f_{k,x,n}^{(s)}(\xi)|\,\mathrm{d}\xi
		\le \frac{s!}{\e^s}\sum_{m=0}^{\lfloor s/2\rfloor}\frac{\e^m}{2^m m!}\sum_{r=0}^{s-2m}\frac{\left(2L_q\sqrt{d}\right)^{s-2m-r}}{r!\,(s-2m-r)!}\int_\R|\xi|^re^{-\xi^2/2\e}\,\mathrm{d}\xi,
	\end{align*}
	where the last integral can be transformed as
	\begin{align*}
		\int_\R|\xi|^re^{-\xi^2/2\e}\,\mathrm{d}\xi
		=\e^{(r+1)/2}\int_\R|t|^re^{-t^2/2}\,\mathrm{d}t.
	\end{align*}
	In particular, using a formula for moments of the normal distribution, see e.g. \cite[Eq.~5-73]{Papoulis:2002}, we conclude that
	\begin{align*}
		M_r
		:=\int_\R |t|^re^{-t^2/2}\,\mathrm{d}t
		=
		\begin{cases}
			\sqrt{2\pi}(r-1)!!,&\text{if $r=2k$},\\
			2^{k+1}k!,&\text{if $r=2k+1$}.
		\end{cases}
	\end{align*}
	Note that $M_r$ is an increasing function in $r$ and $M_r/r!\le 2$ for all $r\ge 1$.
	Hence, for all values of $p_m\in\R,\,m\ne n$ we finally get
	\begin{align*}
		\int_{\R}\left|\partial_n^sf_{k,x}(p)\right|\,\mathrm{d}p_n
		&\le\frac{s!}{\e^s}\sum_{m=0}^{\lfloor s/2\rfloor}\frac{\e^{m+1/2}}{2^m m!}\sum_{r=0}^{s-2m}\frac{\e^{r/2}\left(2L_q\sqrt{d}\right)^{s-2m-r}}{r!\,(s-2m-r)!}M_r\\
		&\le\frac{s!}{\e^s}\sum_{m=0}^{\lfloor s/2\rfloor}\frac{\e^{m+1/2}}{2^m m!}\frac{M_{s-2m}}{(s-2m)!}\left(\sqrt{\e}+2L_q\sqrt{d}\right)^{s-2m}\\
		&\le\frac{s!}{\e^s}\sum_{m=0}^{\lfloor s/2\rfloor}\frac{\e^{m+1/2}}{2^{m-1} m!}\left(\sqrt{\e}+2L_q\sqrt{d}\right)^{s-2m}
		=:c_s.
	\end{align*}
\end{proof}
We can use infinite Riemann sums to approximate the improper integral \eqref{eq:fourier_integral} directly (without truncation).
The error estimate for this approximation is based on the Euler--Maclaurin formula:
\begin{lemma}\label{fact:discretisation_em}
	Consider the uniform grid defined in \eqref{eq:def_gridRS} and let $Q_N^{(\operatorname{RS})}$ denote the one-dimensional formula
	\begin{align*}
		Q^{(\operatorname{RS})}f
		:=\Delta p\sum_{j\in\Z}f(p_j-p_0)
		\approx\int_{-\infty}^\infty f(p)\,\mathrm{d}p.
	\end{align*}
	For all $s\ge 1$, there exists a positive constant $c_s^{(\operatorname{RS})}>0$, depending on $s,L_q$ and $\e$, such that for all $k\in\Gamma_q$ and $x\in\Lambda_x$ we have
	\begin{align*}
		\left|\int_{\R^d}f_{k,x}(p)\,\mathrm{d}p-\left(Q^{(\operatorname{RS})}\right)^df_{k,x}\right|
		\le c^{(\operatorname{RS})}_s\cdot(\Delta p)^{2s+1}.
	\end{align*}
\end{lemma}
\begin{proof}
	Let $k\in\Gamma_q,\,x\in\Lambda_x$ and $s\ge 1$.
	Since $f_{k,x}$ is a smooth function that vanishes at infinity, we can use the Euler--Maclaurin formula, see e.g. \cite[Theorem~7.2.1]{Krommer:1998}, to obtain
	\begin{align*}
		\left|\int_{-\infty}^{\infty}f_{k,x}(p)\,\mathrm{d}p_n-Q^{(\operatorname{RS})}(f_{k,x};p_n)\right|
		\le\int_{\R}\left|\partial_n^{2s+1}f_{k,x}(p)\right|\,\mathrm{d}p_n\cdot\frac{(\Delta p)^{2s+1}}{(2\pi)^{2s+1}},
	\end{align*}
	for all values of $p_m\in\R,\,m\ne n$.
	Furthermore, the bound in \eqref{eq:df^2_l1} yields
	\begin{align}\label{eq:defEnRS}
		\int_{\R}\left|\partial_n^{2s+1}f_{k,x}(p)\right|\,\mathrm{d}p_n\cdot\frac{(\Delta p)^{2s+1}}{(2\pi)^{2s+1}}
		\le\frac{c_{2s+1}(\Delta p)^{2s+1}}{(2\pi)^{2s+1}}
		=:E_n^{(\operatorname{RS})}
	\end{align}
	and thus the claim follows again by Lemma~\ref{fact:quadrature_dto1} for $c^{(\operatorname{RS})}_s=d\cdot c_{2s+1}/(2\pi)^{2s+1}$.
\end{proof}
In the next step we approximate the integral using the Cartesian product formula for the one-dimensional composite midpoint rule.
\begin{lemma}\label{fact:error_midpoint}
	Consider the uniform grid on the box $\Lambda_p\subset\R^d$ defined in \eqref{eq:def_gridTCM} and let $Q_N^{(\operatorname{cM})},\,N\ge 2$, denote the one-dimensional $N$-point midpoint formula
	\begin{align}\label{eq:approx_TcM}
		Q_N^{(\operatorname{cM})}f
		:=\Delta p\sum_{j=1}^Nf(p_j-p_0)
		\approx\int_{-L_p}^{L_p}f(p)\,\mathrm{d}p.
	\end{align}
	There exists a positive constant $c^{(T)}>0$, depending on $L_p$ and $\e$, such that for all $k\in\Gamma_q,\,x\in\Lambda_x$ and $s\ge 1$ we have
	\begin{align*}
		\left|\int_{\R^d}f_{k,x}(p)\,\mathrm{d}p-\left(Q_N^{(\operatorname{cM})}\right)^df_{k,x}\right|
		\le c^{(\operatorname{T})}+c^{(\operatorname{RS})}\cdot(\Delta p)^{2s+1},
	\end{align*}
	where $c^{(\operatorname{RS})}_s>0$ is the constant in Lemma~\ref{fact:discretisation_em} and
	\begin{align}\label{eq:cRScT}
		c^{(\operatorname{T})}
		=d\cdot\sqrt{2\pi\e}\exp\left(-\frac{1}{8\e}L_p^2\right).
	\end{align}
	In particular, $c^{(\operatorname{RS})}_s=\mathcal{O}\left(\e^{-2s-1/2}\right)$ and $c^{(\operatorname{T})}=\mathcal{O}\left(\exp(-\eta/\e)\right)$ with $\eta=L_p^2/8$.
\end{lemma}
\begin{proof}
	Let $k\in\Gamma_q,\,x\in\Lambda_x$ and $s\ge 1$.
	Using an infinite uniform Riemann sum that coincides with the grid points of the finite sum $Q_N^{(\operatorname{cM})}$, the triangle inequality yields
	\begin{align*}
		&\left|\int_{-\infty}^{\infty}f_{k,x}(p)\,\mathrm{d}p_n-Q_N^{(\operatorname{cM})}(f_{k,x};p_n)\right|\dots\\
		&\qquad\le\left|\int_{-\infty}^{\infty}f_{k,x}(p)\,\mathrm{d}p_n-Q^{(\operatorname{RS})}(f_{k,x};p_n)\right|+\left|Q^{(\operatorname{RS})}(f_{k,x};p_n)-Q_N^{(\operatorname{cM})}(f_{k,x};p_n)\right|.
	\end{align*}
	As we have already proved in Lemma~\ref{fact:discretisation_em}, the first term can be bounded by
	\begin{align*}
		\left|\int_{-\infty}^{\infty}f_{k,x}(p)\,\mathrm{d}p_n-Q^{(\operatorname{RS})}(f_{k,x};p_n)\right|
		\le E_n^{(\operatorname{RS})},
	\end{align*}
	where $E_n^{(\operatorname{RS})}>0$ is the error defined in \eqref{eq:defEnRS}.
	Furthermore, the second term can be expressed as the sum of the tails of the infinite uniform Riemann sum, as follows:
	\begin{align*}
		&\left|Q^{(\operatorname{RS})}(f_{k,x};p_n)-Q_N^{(\operatorname{cM})}(f_{k,x};p_n)\right|\dots\\
		&\qquad\le\Delta p\left(\sum_{j=-\infty}^0\left|f_{k,x,n}(p_j-p_0)\right|+\sum_{j=N+1}^\infty\left|f_{k,x,n}(p_j-p_0)\right|\right)\\
		&\qquad\le2\Delta p\sum_{l=0}^\infty\exp\left(-\frac{1}{2\e}\left(L_p+\frac{2l+1}{2}\Delta p\right)^2\right),
	\end{align*}
	where we used that, for all $\xi\in\R$, we have $|f_{k,x,n}(\xi)|\le\exp(-\xi^2/2\e)$ uniformly in $k,x$ and $n$ according to \eqref{eq:def_fkxn}.
	Hence, by estimating the infinity sum by an integral, we get
	\begin{align*}
		\left|Q^{(\operatorname{RS})}(f_{k,x};p_n)-Q_N^{(\operatorname{cM})}(f_{k,x};p_n)\right|
		&\le2\int_{L_p-\Delta p/2}^\infty\exp\left(-\frac{1}{2\e}z^2\right)\,\mathrm{d}z\\
		=\sqrt{2\pi\e}\operatorname{erfc}\left((L_p-\Delta p/2)/\sqrt{2\e}\right)
		&\le\sqrt{2\pi\e}\exp\left(-\frac{1}{8\e}L_p^2\right),
	\end{align*}
	where the last inequality follows from the exponential-type bound $\operatorname{erfc}(z)\le e^{-z^2}$, $z>0$, for the complementary error function, see e.g. \cite[Eq.~(5)]{Chiani:2003} and the fact that for $N\ge 2$ we have $L_p-\Delta p/2\ge L_p/2$.
	The final bound therefore follows from Lemma~\ref{fact:quadrature_dto1} for the constant presented in \eqref{eq:cRScT}.
\end{proof}
\begin{remark}
	Note that for general integrands the composite midpoint rule gives an error of $\mathcal{O}(N^{-2})$.
	For smooth and periodic integrands, however, it is known that this bound is not sharp with respect to $N$, but achieves spectral accuracy, see e.g. \cite[Theorem~8]{Sidi:1988}.
	Lemma~\ref{fact:error_midpoint} shows that the same rate is achieved for Gaussian integrands until the truncation error is reached (exponentially small in $L_p^2/\e$), which is particularly confirmed by our numerical examples in \S\ref{numeric}.
\end{remark}
In the last step we use Gauss--Hermite quadrature, which is a special form of Gaussian quadrature on the real line for a Gaussian weight function.

\subsection{Gauss--Hermite quadrature}\label{sub:rep_Gauss_Hermite}
Consider the one-dimensional formula
\begin{align}\label{eq:hermite_new}
	\sum_{j=1}^Nw_jh(s_j)
	\approx\int_{\R}e^{-p^2}h(p)\,\mathrm{d}p,
\end{align}
where the quadrature nodes $s_1,\dots,s_N$ are chosen as the zeros of the $N$th Hermite polynomial and the real numbers $w_j$ are the corresponding weights.
In particular, the $N$th Hermite polynomial and the weights are given by
\begin{align*}
	H_N(x)
	=(-1)^Ne^{x^2}\frac{\mathrm{d}^N}{\mathrm{d}x^N}e^{-x^2}
	\quad\text{and}\quad
	w_j
	=\frac{2^{N+1}N!\sqrt{\pi}}{[H_{N+1}(s_j)]^2}.
\end{align*}
Note that both the weights and the nodes depend on $N$, but we do not indicate this dependence in our notation.
For this type of quadrature we obtain the following error bound:
\begin{lemma}\label{fact:error_GH}
	Consider the transformed Gauss--Hermite formula
	\begin{align*}
		Q_N^{(\operatorname{GH})}f
		:=\sum_{j=1}^N\omega_j f(p_j-p_0)
		\approx\int_{-\infty}^\infty f(p)\,\mathrm{d}p,
	\end{align*}
	where the transformed nodes $p_j$ and weights $\omega_j$ are defined in terms of the standard nodes $s_j$ and weights $w_j$ in \eqref{eq:hermite_new} by
	\begin{align*}
		p_j
		:=p_0+s_j\sqrt{2\e}
		\quad\text{and}\quad
		\omega_j
		:=e^{s_j^2}w_j\sqrt{2\e}.
	\end{align*}
	For all $s\ge 1$, there exists a positive constant $c_{s}^{(\operatorname{GH})}>0$, depending on $s,L_q$ and $\e$, such that for all $k\in\Gamma_q$ and $x\in\Lambda_x$ we have
	\begin{align*}
		\left|\int_{\R^d}f_{k,x}(p)\,\mathrm{d}p-\left(Q_N^{(\operatorname{GH})}\right)^df_{k,x}\right|
		\le c_s^{(\operatorname{GH})}\cdot N^{-s/2}.
	\end{align*}
	In particular, $c_s^{(\operatorname{GH})}=\mathcal{O}\left(\e^{(d-s)/2}\right).$
\end{lemma}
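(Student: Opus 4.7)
My plan is to reduce the claim to a one-dimensional error estimate via the tensorization lemma, rewrite the transformed rule as a standard Gauss--Hermite rule acting on a bounded entire integrand after rescaling, and then invoke the classical Gauss--Hermite remainder formula.

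First I would apply Lemma~\ref{fact:quadrature_dto1} (the argument of which carries over from $[0,1]^d$ to $\R^d$ since the integrand factors across coordinates as $f_{k,x}(p)=\prod_{n=1}^d f_{k,x,n}(p_n)$ with the univariate factors defined in the proof of Lemma~\ref{fact:second_derivative}). This reduces the task to bounding the one-dimensional error
\[
\mathcal{E}_n := \left|\int_\R f_{k,x,n}(p_n)\,\mathrm{d}p_n - Q_N^{(\operatorname{GH})}(f_{k,x,n};p_n)\right|
\]
uniformly in $k\in\Gamma_q$ and $x\in\Lambda_x$.

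Next I would perform the substitution $p_n = s\sqrt{2\e}$, so that the Gaussian envelope $e^{-p_n^2/(2\e)}$ becomes the Hermite weight $e^{-s^2}$. This rewrites
\[
\int_\R f_{k,x,n}(p_n)\,\mathrm{d}p_n = \sqrt{2\e}\int_\R e^{-s^2}\tilde h(s)\,\mathrm{d}s,\qquad \tilde h(s):=\exp\bigl(ib_{k,n}(x)\sqrt{2/\e}\,s\bigr),
\]
and a short computation using $p_j=p_{0,n}+s_j\sqrt{2\e}$ and $\omega_j = e^{s_j^2}w_j\sqrt{2\e}$ shows that
\[
Q_N^{(\operatorname{GH})}(f_{k,x,n};p_n) = \sqrt{2\e}\sum_{j=1}^N w_j\tilde h(s_j).
\]
Thus $\mathcal{E}_n$ equals $\sqrt{2\e}$ times the error of the standard $N$-point Gauss--Hermite rule applied to the entire function $\tilde h$, which has unit modulus on the real line.

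I would then invoke the classical Gauss--Hermite remainder formula (see e.g.~\cite{Davis:2007aa}): for some $\eta\in\R$,
\[
\int_\R e^{-s^2}\tilde h(s)\,\mathrm{d}s - \sum_{j=1}^N w_j\tilde h(s_j) = \frac{N!\sqrt\pi}{2^N(2N)!}\,\tilde h^{(2N)}(\eta).
\]
Since $\tilde h(s)=e^{ias}$ with $|a|\le 2L_q\sqrt{2d/\e}$ by \eqref{eq:estimate_B2}, the derivative factor is controlled by $|a|^{2N}$, and Stirling's approximation to $N!/(2N)!$ (which gives $N!/(2^N(2N)!) \lesssim (8^N N!)^{-1}$) shows that the resulting bound decays super-exponentially in $N$. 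In particular, since for any fixed $s\ge 1$ the sequence $(c^N/N!)N^{s/2}$ is bounded in $N$, there exists a positive constant $c_s^{(\operatorname{GH})}$, depending on $s,L_q$ and $\e$, such that the one-dimensional error is dominated by $c_s^{(\operatorname{GH})}N^{-s/2}$. Applying Lemma~\ref{fact:quadrature_dto1} and absorbing the factor of $d$ then yields the claim.

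The main obstacle is essentially bookkeeping rather than mathematical: carefully matching the transformed weights and nodes with the standard Gauss--Hermite rule against the weight $e^{-s^2}$, and checking that the tensorization step applies outside the normalized setting stated in Lemma~\ref{fact:quadrature_dto1}. Once these points are in place, the announced polynomial decay in $N^{-1/2}$ is a very mild consequence of the well-known super-exponential convergence of Gauss--Hermite quadrature on entire integrands of finite exponential type.
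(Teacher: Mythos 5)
Your proposal is correct, but the core one-dimensional estimate is obtained by a genuinely different route from the paper's. The paper keeps the multivariate integrand, bounds the partial derivatives $|\partial_n^s h_{k,x}|\le(2\sqrt{2d/\e}L_q)^s$ via \eqref{eq:estimate_B2}, and then invokes the Mastroianni--Monegato error estimate for Gauss--Hermite quadrature, which requires only $s$ locally absolutely continuous derivatives weighted in $L^1(e^{-(1-\delta)p^2})$ and delivers the rate $N^{-s/2}$ directly; tensorization via Lemma~\ref{fact:quadrature_dto1} then finishes the argument exactly as you describe. You instead exploit the fact that after rescaling the univariate integrand is the entire function $\tilde h(s)=e^{ias}$ with all derivatives uniformly bounded by $|a|^{\mathrm{order}}$, and apply the classical derivative-form remainder $\frac{N!\sqrt\pi}{2^N(2N)!}\tilde h^{(2N)}(\eta)$. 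This buys a stronger conclusion for this particular integrand -- super-exponential decay of order $(a^2/4)^N/N!$ (note $\binom{2N}{N}\ge 2^N$ gives the factor $4^N N!$ rather than your stated $8^N N!$, which is only true up to a factor $2N+1$; either way the conclusion stands), from which the claimed $N^{-s/2}$ bound follows trivially and uniformly in $k,x$ since $|a|\le 2L_q\sqrt{2d/\e}$. The paper's route is more robust (it would survive integrands with only finitely many derivatives) and avoids two small points you should still patch: the mean-value remainder formula is stated for real-valued $C^{2N}$ functions, so for $\tilde h=e^{ias}$ you must apply it separately to $\cos(as)$ and $\sin(as)$ (costing a harmless factor $2$); and the tensorization over $\R^d$ with the non-normalized weights $\omega_j$ needs the stability bounds $|Q_N^{(\operatorname{GH})}f_{k,x,n}|\le\sqrt{2\e}\sum_j w_j=\sqrt{2\pi\e}$ and $|\int f_{k,x,n}|\le\sqrt{2\pi\e}$ in the telescoping identity, producing a prefactor $(2\pi\e)^{(d-1)/2}$ that can be absorbed into $c_s^{(\operatorname{GH})}$ -- the same normalization issue the paper itself glosses over when applying Lemma~\ref{fact:quadrature_dto1} outside $[0,1]^d$.
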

\begin{proof}
	Let $k\in\Gamma_q$ and $x\in\Lambda_x$.
	A linear transformation of the integral yields
	\begin{align*}
		\int_{\R^d}f_{k,x}(p)\,\mathrm{d}p
		=(2\e)^{d/2}\int_{\R^d} e^{-|p|^2}h_{k,x}(p)\,\mathrm{d}p,
	\end{align*}
	where the complex-valued function $h_{k,x}\colon\R^d\to\C$ is given for all $p\in\R^d$ by
	\begin{align*}
		h_{k,x}(p)
		:=\exp\left(ib_k(x)^Tp\sqrt{2/\e}\right).
	\end{align*}
	In particular, the partial derivatives of order $s\ge 1$ are bounded for all $p\in\R^d$ by
	\begin{align*}
		\left|\partial_n^sh_{k,x}(p)\right|
		\le \left(\sqrt{2/\e}|b_k(x)|\right)^s
		\le \left(2\sqrt{2d/\e}L_q\right)^s,
		\quad n=1,\dots,d,
	\end{align*}
	where we used the estimate in \eqref{eq:estimate_B2}.
	In \cite[Theorem~2]{Mastroianni:1994}, the authors prove that if the $(s-1)$th derivative of a function $h\colon\R\to\C$ is locally absolutely continuous and $h^{(s)}(p)e^{-(1-\delta)p^2}\in L^1(\R)$ for some $0<\delta<1$, then
	\begin{align*}
		\left|\int_\R e^{-p^2}h(p)\,\mathrm{d}p-\sum_{j=1}^Nw_jh(s_j)\right|
		\le C\cdot N^{-s/2}\|h^{(s)}(p)e^{-(1-\delta)p^2}\|_{L^1(\R)},
	\end{align*}
	where $C$ is a constant that is independent of $N$ and $h$.
	Since for all $0<\delta<1$ and $n\in\{1,\dots,d\}$ we have
	\begin{align}\label{eq:delta_formula}
		\int_{\R}\left|\partial_n^sh_{k,x}(p)e^{-(1-\delta)p_n^2}\right|\,\mathrm{d}p_n
		\le\left(2\sqrt{2d/\e}L_q\right)^s\sqrt{\frac{\pi}{1-\delta}},
	\end{align}
	we obtain the following bound
	\begin{align*}
		&\left|\int_{-\infty}^{\infty}e^{-p_n^2}h_{k,x}(p)\,\mathrm{d}p_n-\sum_{j=1}^N w_j\,h_{k,x}(p_1,\dots,p_{n-1},s_j,p_{n+1},\dots,p_d)\right|\\
		&\qquad\le C\cdot N^{-s/2}\cdot \left(2\sqrt{2d/\e}L_q\right)^s\sqrt{\pi}
		=:E_n^{(\operatorname{GH})},
	\end{align*}
	for all values of $p_m\in\R,\,m\ne n$, and therefore the claim follows again by Lemma~\ref{fact:quadrature_dto1} for the constant
	\begin{align*}
		c_s^{(\operatorname{GH})}
		=C\sqrt{\pi}\cdot 2^{(3s+d)/2}d^{s/2+1}\e^{(d-s)/2}L_q^s.
	\end{align*}
	Note that we used the formula in \eqref{eq:delta_formula} with $\delta=0$, which is allowed since
	\begin{align*}
		\partial_n^sh_{k,x}(p)e^{-p_n^2}
		\in L^1(\R)\quad
		\text{for all $n\in\{1,\dots,d\}$}
	\end{align*}
	and thus both sides of \eqref{eq:delta_formula} converge for $\delta\to 0$.
	The inequality therefore remains valid in the limit.
\end{proof}
Now that we have derived the corresponding discretisation errors, we can catch up on the proof of the main theorem.

\subsection{Proof of the final result}\label{sub:Proof of the final result}
\begin{proof}
	Using the representation of the Gaussian wave packet $\psi_0=g_{z_0}^{C_0,\e}$ according to Proposition~\ref{fact:rep1}, we get
	\begin{align*}
		E^{(\operatorname{rule})}
		=\sup_{x\in\Lambda_x}\frac{1}{S(x)}\sum_{k\in\Gamma_q}\left|\I_{q_k}(x)-\sum_{j\in\Gamma_p^{(\operatorname{rule})}} r^{(\operatorname{rule})}_{j,k}\,g_{j,k}(x)\right|.
	\end{align*}
	In the following let $\operatorname{rule}=\operatorname{TcM}$.
	Combining the representation of $\I_q(x)$ according to Lemma~\ref{fact:rep2} with the discretisation via the composite midpoint rule, we obtain
	\begin{align*}
		\I_{q_k}(x)
		&\overset{\eqref{eq:relation_f}}{=}g_0(x-q_k)c_k(x)\int_{\R^d}f_{k,x}(p)\,\mathrm{d}p\\[2mm]
		&\overset{\eqref{eq:approx_TcM}}{\approx}\sum_{j_1=1}^N\dots\sum_{j_d=1}^N r^{(\operatorname{TcM})}_{j,k}\,g_{j,k}(x)\quad\text{(composite midpoint rule)},
	\end{align*}
	where the representation coefficients $r^{(\operatorname{TcM})}_{j,k}\in\C$ are given by
	\begin{align*}
		r^{(\operatorname{TcM})}_{j,k}
		=\frac{(\Delta p)^d}{(2\pi\e)^d}\langle g_{j,k}\mid\psi_0\rangle
		=(\Delta p)^dc_k(x)f_{k,x}(p_j-p_0)e^{-ip_j\cdot(x-q_k)/\e}.
	\end{align*}
	Hence, for all $j\in\Gamma_p^{(\operatorname{TcM})}$ and $k\in\Gamma_q$, we obtain
	\begin{align*}
		&\left|\I_{q_k}(x)-\sum_{j\in\Gamma_p^{(\operatorname{TcM})}} r^{(\operatorname{TcM})}_{j,k}\,g_{j,k}(x)\right|\\
		&\qquad=\left|c_k(x)\right|\left|g_0(x-q_k)\right|\left|\int_{\R^d}f_{k,x}(p)\,\mathrm{d}p-\left(Q_N^{(\operatorname{cM})}\right)^df_{k,x}\right|
	\end{align*}
	and by definition of $c_k(x)$ in Lemma~\ref{fact:rep2} it follows that
	\begin{align*}
		\sup_{x\in\Lambda_x}|c_k(x)|
		=(2\pi\e)^{-d}\exp\left(-\frac{1}{8\e}|q_k-q_0|^2\right)
		\le(2\pi\e)^{-d}.
	\end{align*}
	Moreover, by Lemma~\ref{fact:error_midpoint} (composite midpoint rule), we conclude that there are positive constants $c^{(\operatorname{T})}>0$ and $c_s^{(\operatorname{RS})}>0$ such that
	\begin{align*}
		\sup_{x\in\Lambda_x}\left|\int_{\R^d}f_{k,x}(p)\,\mathrm{d}p-\left(Q_N^{(\operatorname{cM})}\right)^df_{k,x}\right|
		\le c^{(\operatorname{T})}+c_s^{(\operatorname{RS})}\cdot(\Delta p)^{2s+1}.
	\end{align*}
	Consequently, since there exists a constant $C_{\Gamma_q}>0$ (see Appendix~\ref{sub:Estimates for the summation curve}), depending on $\e,\,L_q,\,\Delta q$ and the eigenvalues of $\operatorname{Im}(C)$, such that
	\begin{align}\label{eq:boundCGamma}
		\sup_{x\in\Lambda_x}\frac{1}{S(x)}\sum_{k\in\Gamma_q}|g_0(x-q_k)|
		<C_{\Gamma_q},
	\end{align}
	we conclude that
	\begin{align*}
		&\sup_{x\in\Lambda_x}\frac{1}{S(x)}\sum_{k\in\Gamma_q}\left|\I_{q_k}(x)-\sum_{j\in\Gamma_p^{(\operatorname{TcM})}} r^{(\operatorname{TcM})}_{j,k}\,g_{j,k}(x)\right|\\
		&\qquad\le(2\pi\e)^{-d}\Big(c^{(\operatorname{T})}+c_s^{(\operatorname{RS})}\cdot(\Delta p)^{2s+1}\Big)\sup_{x\in\Lambda_x}\frac{1}{S(x)}\sum_{k\in\Gamma_q}|g_0(x-q_k)|\\
		&\qquad<C^{(\operatorname{T})}+C_s^{(\operatorname{RS})}\cdot(\Delta p)^{2s+1},
	\end{align*}
	where the positive constants $C^{(\operatorname{T)}},C_s^{(\operatorname{RS)}}>0$ are given by
	\begin{align*}
		C^{(\operatorname{T)}}
		&=(2\pi\e)^{-d}c^{(\operatorname{T})}C_{\Gamma_q}\\
		&=d\cdot{e^{-L_p^2/8\e}}\cdot\frac{2^{(d+1)/2}(\pi\e)^{(-3d+2)/4}}{\det(\operatorname{Im}C)^{1/4}}\prod_{n=1}^d\frac{1+\Delta q\sqrt{\frac{\lambda_n}{2\pi\e}}}{\operatorname{erf}\left(2L_q\sqrt{\frac{\lambda_n}{\e}}\right)-\operatorname{erf}\left(\Delta q\sqrt{\frac{\lambda_n}{\e}}\right)}
	\end{align*}
	and
	\begin{align*}
		C_s^{(\operatorname{RS)}}
		&=(2\pi\e)^{-d}c_s^{(\operatorname{RS})}C_{\Gamma_q}\\
		&=d\cdot\frac{2^{(-4s+d-2)/2}(\pi\e)^{(-8s-3d-4)/4}(2s+1)!}{\det(\operatorname{Im}C)^{1/4}}\cdots\\
		&\left(\sum_{m=0}^{s}\frac{\e^{m+1/2}}{2^{m-1} m!}\left(\sqrt{\e}+2L_q\sqrt{d}\right)^{2s+1-2m}\right)\prod_{n=1}^d\frac{1+\Delta q\sqrt{\frac{\lambda_n}{2\pi\e}}}{\operatorname{erf}\left(2L_q\sqrt{\frac{\lambda_n}{\e}}\right)-\operatorname{erf}\left(\Delta q\sqrt{\frac{\lambda_n}{\e}}\right)}.
	\end{align*}
	The corresponding estimates for the infinite Riemann sum and Gauss--Hermite quadrature follow the same arguments as presented for (TcM), but using Lemma~\ref{fact:discretisation_em} and Lemma~\ref{fact:error_GH}, respectively.
\end{proof}
\begin{remark}\label{remark_dependence}
	We would like to point out that our estimates yield constants $C_s^{(\operatorname{RS)}}$ and $C_s^{(\operatorname{GH})}$ whose dependence on the semiclassical parameter $\e$ is not optimal.
	For infinite Riemann sums and the composite midpoint rule, we use the $L^1$ norm of the partial derivatives $f_{k,x,n}^{(s)}$, which are estimated in \eqref{eq:boundKampe} via a bound for the Kamp\'{e} de F\'{e}ri\'{e}t polynomial.
	In Appendix~\ref{sec:Hermite_appendix} we show that these derivatives can equivalently be expressed in terms of Hermite polynomials $H_s$ with affinely-shifted complex-valued argument.
	Although for real arguments (i.e., $b_k(x)=0$) we can use that, see e.g. \cite[Eq.~(2)]{Indritz:1961},
	\begin{align*}
		\left|e^{-y^2/2}H_s(y)\right|
		\le\sqrt{2^ss!}\quad
		\text{for all $y\in\R$},
	\end{align*}
	to obtain a sharp bound for the $L^1$ norm of $f_{k,x,n}^{(s)}$,
	to the best of our knowledge this cannot be generalised for complex arguments.
	In the case of Gauss--Hermite quadrature, it is also the complex-valued argument that gives a bound that is not sharp, but this time for the $L^1$ norm of $\partial_n^sh_{k,x}(p)e^{-(1-\delta)p_n^2}$ in \eqref{eq:delta_formula}.
	
	Furthermore, a closer look at the constant $C_{\Gamma_q}$, which is used in \eqref{eq:boundCGamma}, shows that it can be written as, see \eqref{eq:CGamma_final},
	\begin{align*}
		C_{\Gamma_q}
		=\prod_{n=1}^d\frac{c_{\Delta q,\e,\lambda_n}}{C_{\e,\lambda_n,L_q}},
	\end{align*}
	where the constants $c_{\Delta q,\e,\lambda_n}>0$ are pointwise upper bounds for the components of $\sum_{k\in\Gamma_q}|g_0(x-q_k)|$ (cf. Lemma~\ref{fact:app:upper}) and the constants $C_{\e,\lambda_n,L_q}>0$ are pointwise lower bounds for $S(x)$, each on $\Lambda_x$ (cf. Lemma~\ref{fact:app:lower}).
	It is easy to see that for a fixed grid size $\Delta q>0$ we have
	\begin{align*}
		c_{\Delta q,\e,\lambda_n}
		\to\infty
		\quad\text{and}\quad
		C_{\e,\lambda_n,L_q}
		\to 0
		\quad\text{as $\e\to 0$},
	\end{align*}
	which implies that $C_{\Gamma_q}$ blows up, but for the choice $\Delta q\sim\sqrt{\e}$ we get $C_{\Gamma_q}\to 0$.
	However, our proof is based on the pointwise maximisation of the numerator and the pointwise minimisation of the denominator for uniform grids to use the approximation of Gaussian summation curves according to Lemma~\ref{fact_estimate_s}.
	To the best of our knowledge, a sharp bound of
	\begin{align*}
		\sup_{x\in\Lambda_x}\frac{1}{S(x)}\sum_{k\in\Gamma_q}|g_0(x-q_k)|
	\end{align*}
	for arbitrary grids is not known.
	In particular, the quality of the bound in \eqref{eq:boundCGamma} is studied in our numerical experiments, see \S\ref{sec:Example2} and \S\ref{sec:Dependence}.
\end{remark}
%

\subsection{High-dimensional quadrature}\label{sub:h_dim_quadrature}
Our analysis shows that for conventional grid based approaches such as Riemann sums or one-dimensional Gauss--Hermite quadrature in every coordinate direction, the discretisation of the multidimensional Gaussian integral
\begin{align*}
	\int_{\R^d}f_{k,x}(p)\,\mathrm{d}p
	=\int_{\R^d}\exp\left(-\frac{1}{2\e}p^TAp+\frac{i}{\e}b_k(x)^Tp\right)\,\mathrm{d}p
\end{align*}
is unacceptably slow and no longer practical, since the number of function evaluations increases exponentially with the dimension.
Sparse grid methods can overcome the curse of dimensionality to a certain extent and we refer to \cite{Gerstner:1998} for a comprehensive presentation of several methods based on Smolyak's sparse grid construction and further developments.
In particular, sparse grids have been used for the midpoint rule, see \cite{Baszenki:1993}, and for Gauss--Hermite quadrature, see \cite[Section~8.1]{Lasser:2020}.
These variants need less than $N(\log N)^{d-1}$ quadrature nodes, whereas $N^d$ are required for the full tensor grid, and an error bound for the approximation based on Gauss--Hermite quadrature can be found in \cite[Theorem~8.2]{Lasser:2020}.
It should be noted that the sparse grid Gauss--Hermite quadrature formula is not nested (i.e., the quadrature points of level $l-1$ are not a subset of those of level $l$).
	One could therefore consider truncating the phase space integral to a finite box and using the trapezoidal rule or the Clenshaw--Curtis quadrature there, which are nested and therefore require only half as many grid points as in the case of a non-nested formula, see \cite[Chapter~III.1.2,\,Remark]{Lubich:2008}.
	In addition, tensor-train (TT) approximations introduced by Oseledets and Tyrtyshnikov, see \cite{Oseledets:2009,Oseledets:2011}, provide an alternative that has been successfully used to lift other grid-based methods with applications in quantum dynamics up to 50 dimensions, see \cite{Soley:2021,Soley:2022}.
	Our future research will address the rigorous analysis of TT approximations for the Gaussian wave packet transform for dimensions $d\ge 2$.

\section{Numerical Results}\label{numeric}
We present numerical experiments for the reconstruction of one-dimensional Gaussian wave packets according to Theorem~\ref{fact:final}.
In comparison to approximations based on uniform grids, the different setups illustrate the superiority of our new representation obtained by Gauss--Hermite quadrature.

We focus on two examples to demonstrate the dependence of the errors on the various parameters that are involved.
The first example deals with the interplay of the parameters $\gamma\in\C$ (width of the basis functions) and $M\ge 1$ (number of grid points in position space), while the second example studies the dependence on the semiclassical parameter $\e>0$, which controls the oscillations of the wave packet $\psi_0$ and the basis functions $g_{j,k}$.

\subsection{Example 1}
We consider the wave packet
\begin{align}\label{eq:wpex1}
	\psi_0(x)
	=\pi^{-1/4}\exp\left(-\frac{1}{2}x^2\right),\quad
	\Big[\e=1,\,\gamma=i,\,(q_0,p_0)=(0,0)\Big],
\end{align}
on $\Lambda_x=[-8,8]$.
Note that this choice of $\Lambda_x$ corresponds to $L_q=8$.
%
%
\begin{figure}
	\includegraphics{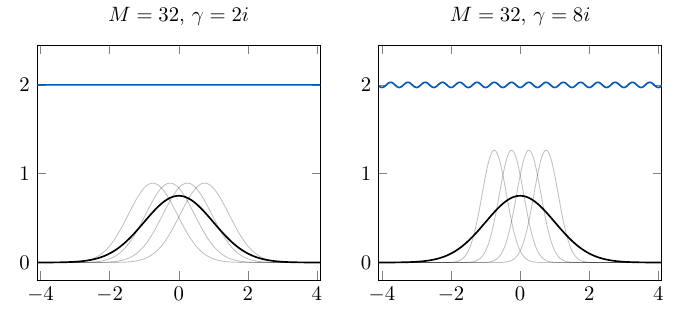}
	\caption{Plot of the Gaussian wave packet $\psi_0$ (black), four basis functions around the center $q_0=0$ (gray) and the summation curve (blue) for two choices of the width parameter $\gamma$.
		The smaller value of $\gamma$ (left) yields a better approximation of the summation curve, which is a consequence of the larger overlap and is evident from the analytical representation in Lemma~\ref{fact_estimate_s}.}
		\label{fig:figure2}
\end{figure}
%
The plots in Figure~\ref{fig:figure2} show the wave packet $\psi_0$ together with four basis functions around the center $q_0=0$ (gray) and the summation curve (blue) for $\gamma=2i$ (left) and $\gamma=8i$ (right).
In both plots, the summation curve is built on a uniform grid with $M=32$ grid points, which gives a spacing of $\Delta q=0.5$ for the basis functions.
%
%
\begin{figure}
	\includegraphics{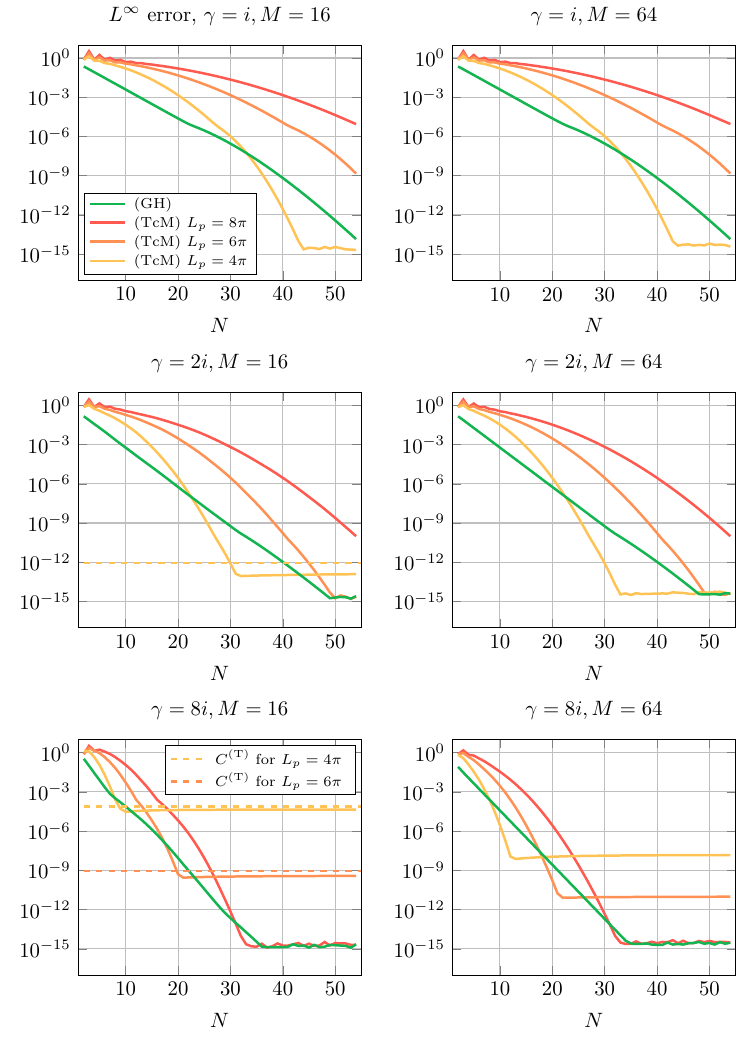}
	\caption{Reconstruction errors for different combinations of $M$ and $\gamma$.
		The approximations with TcM have a fast initial decay, followed by a transition to the truncation error.
		For GH all plots show initial exponential decay (green lines).
		The dashed lines on the left show the error constants $C^{(\operatorname{T})}$ of the truncation error for the different box sizes (yellow: $L_p=4\pi$, orange: $L_p=6\pi$).
		}\label{fig:figure3}	
\end{figure}
%
In particular, smaller values of $\gamma$ yield that the spread (and thus also the overlap) of the basis functions increases as $\gamma$ decreases, which yields better approximations of the summation curve.
This can also be observed in the plots:
For the summation curve at the right-hand side we see the typical oscillations as we know them from the cosine function, whereas at the left-hand side no oscillations can be seen and the summation curve approximates the predicted value $1/\Delta q=2$.
We would like to recall that our reconstruction formula in \eqref{eq:rec_main} uses the exact summation curve $S(x)$ and the additional approximation $1/S(x)\approx\Delta q$ would result in the similar but different reconstruction formula \eqref{eq:fbi_gabor2}, which corresponds to the direct discretisation of the phase space integral.

The plots in Figure~\ref{fig:figure3} show the reconstruction errors in the supremum norm on $\Lambda_x$ for different combinations of $\gamma$ and $M$.
For each of the three rows ($\gamma$ is fixed here) we compare $M=16$ (left) and $M=64$ (right) as well as three choices for the truncation parameter in momentum space ($L_p=4\pi,6\pi,8\pi)$.
For TcM we observe that larger values of $L_p$ yield a worse decay of the error, which is in accordance with our theoretical result in Lemma~\ref{fact:error_midpoint}.
Moreover, for TcM we see a fast initial decay of the errors, since the midpoint rule achieves spectral accuracy for smooth integrands that vanish at infinity.
The plots also show, for example, that for the smallest box (yellow) the truncation error is reached at about ten grid points (plateau for $\gamma=8i,M=16$), resulting in a relatively weak approximation to the original wave packet with an error of $\approx10^{-4}$.
For the plots on the left ($M=16$), we have also added dashed lines indicating the theoretical error constants $C^{(\operatorname{T})}$ of the truncation error for the different box sizes.

A comparison of the two columns in Figure~\ref{fig:figure3} also shows that the error is only slightly effected by the number of grid points in position space (left: $M=16$, right: $M=64$), which can be explained by the fact that all error constants in Theorem~\ref{fact:final} consist of the independent prefactor $C_{\Gamma_q}>0$, which does not depend on $M$.

For the reconstructions based on Gauss--Hermite quadrature (green lines), the errors show initial exponential decay (Lemma~\ref{fact:error_GH} predicts spectral convergence) down to the least value allowed by machine accuracy (straight lines at $\approx 10^{-15}$).
In particular, all plots show the superiority of GH for small values of $N$.
As we will see in the next example, the discrepancy between TcM and GH becomes even more pronounced if the underlying wave packet $\psi_0$ is oscillatory.

\subsection{Example 2}\label{sec:Example2}
%
%
\begin{figure}[h]
	\includegraphics{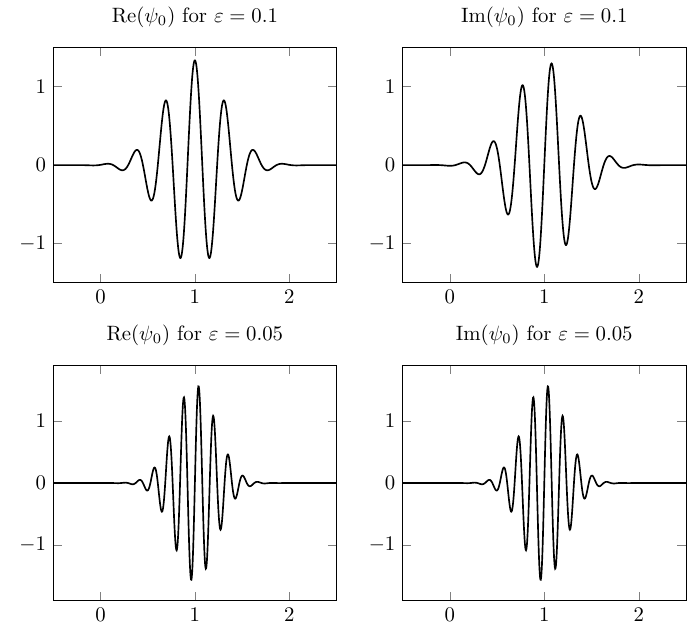}
	\caption{Real (left) and imaginary part (right) of the wave packet $\psi_0$ for different values of $\e$.
		Smaller values yield higher oscillations.}
	\label{fig:figure4}
\end{figure}
%
%
\begin{figure}
	\centering\includegraphics{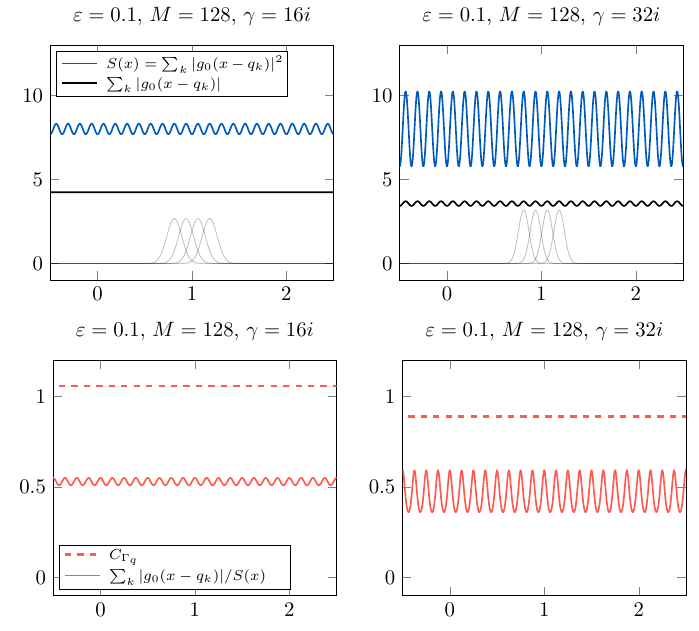}
	\caption{Top: Plot of four basis functions (grey), the summation curve (blue) and the function $x\mapsto\sum_{k}|g_0(x-q_k)|$ (black).
		For the larger value of $\gamma$, the summation curve has larger oscillations (right panels).
		Bottom: Quotient $x\mapsto\sum_{k}|g_0(x-q_k)|/S(x)$ (solid lines) and upper bound $C_{\Gamma_q}$ (dashed lines).
		For the larger value of $\gamma$ we get a more accurate upper bound.
	}\label{fig:figure5}	
\end{figure}
%
For $\e\in\big\{0.1,0.05\big\}$ we consider the wave packet
\begin{align*}
	\psi_0(x)
	=(\pi\e)^{-1/4}
		\exp\left(-\frac{1}{2\e}(x-1)^2+\frac{2i}{\e}\left(x-1\right)\right),\quad
	\Big[\gamma=i,\,(q_0,p_0)=(1,2)\Big],
\end{align*}
on $\Lambda_x=[-8,8]$.
For small values of $\e$, the presence of the complex phase factor yields that the real and imaginary part of the wave packet is oscillatory, see Figure~\ref{fig:figure4}.
For all computations we used $M=128$ points in the position space, corresponding to a uniform spacing of $\Delta q=1/8$.
Moreover, we used two values for the width of the basis functions ($\gamma=16i$ and $\gamma=32i$).
For $\e=0.1$ we find in Figure~\ref{fig:figure5} a plot of four basis functions (grey), the summation curve $S(x)=\sum_k|g_0(x-q_k)|^2$ (blue) and the function $x\mapsto \sum_k|g_0(x-q_k)|$ (black).
The resulting quotient $\sum_k|g_0(x-q_k)|/S(x)$, which was estimated by the constant $C_{\Gamma_q}$ in \eqref{eq:CGamma_final}, is shown in the panels below (dashed lines).
In agreement with our theoretical result, we obtain a more accurate but not sharp upper bound for the larger value of $\gamma$ (right panel).
The smaller value gives a better approximation to $1/\Delta q=8$ (the oscillations in the upper left panel have smaller amplitude).
The errors for the reconstruction of the wave packets can be found in Figure~\ref{fig:figure6}.
As in the first example, we observe that the reconstructions based on GH give an initial exponential decay (green lines).
In addition, all plots underline the superiority of the reconstructions based on GH for wave packets with high oscillations, independently of the width of the basis functions.
The experiment clearly shows that with our new reconstruction formula, the number $N$ of grid points can be reduced significantly.
%
%
\begin{figure}[h]
	\includegraphics{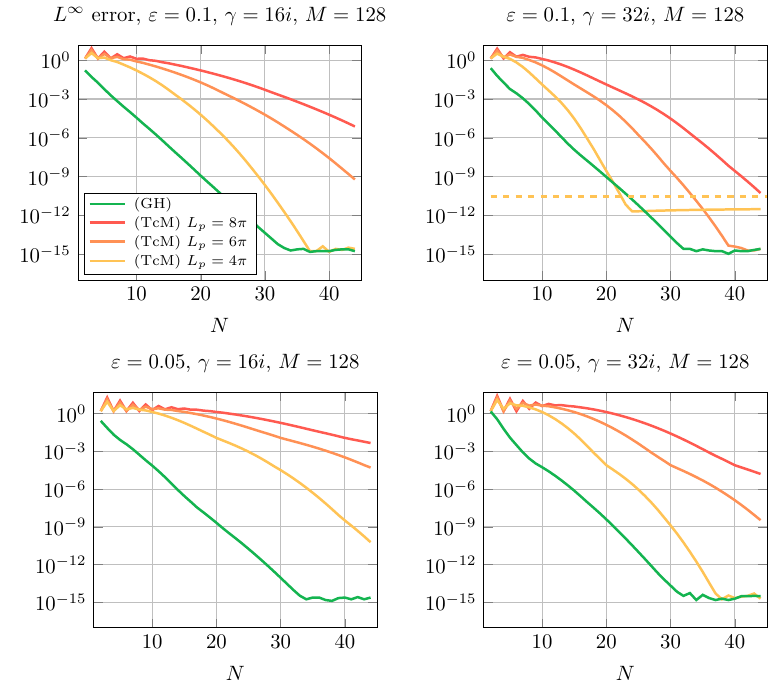}
	\caption{Reconstruction errors for different values of $\gamma$ and $\e$.
		The approximations based on Gauss--Hermite quadrature (green) show the best decay.
		Compared to the midpoint rule, the number of grid points can be reduced significantly, especially for small values of $\e$ (high oscillations).
		The dashed line in the upper right panel shows the error constant $C^{(\operatorname{T})}$ of the truncation error for $L_p=4\pi$.
		}\label{fig:figure6}
\end{figure}
%
{}\\

Finally, let us use the results of our numerical experiments to summarise the merits of the different quadrature rules.
A significant advantage of Gauss--Hermite quadrature is that the integral
\begin{align*}
	\int_{\R^d}f_{k,x}(p)\,\mathrm{d}p
	=\int_{\R^d}\exp\left(-\frac{1}{2\e}|p|^2+\frac{i}{\e}b_k(x)^Tp\right)\,\mathrm{d}p
\end{align*}
can be approximated directly without truncation.
For the composite midpoint rule, in contrast, a truncation box $\Lambda_p$ must be chosen.
It has become clear that, depending on the choice of this box, the truncation errors already dominate for relatively small numbers of grid points and therefore leads to rather weak approximations of the original wave packet.
This can of course be circumvented by increasing the size of the box, but this results in more grid points being needed to obtain the same error.
Furthermore, the superiority of Gauss--Hermite quadrature has become clearly visible especially for small numbers of grid points and small values of $\e$.
However, the computation of the nodes and weights for Gauss--Hermite quadrature involves additional costs compared to the simple midpoint rule, where essentially only the sum over all grid points needs to be performed (note that for the computation of the corresponding summands we need $N$ function evaluations of $f_{k,x}$ for both quadrature rules).
Using the Golub--Welsch algorithm \cite{Golub:1969}, which we used for our implementation, the computation of the nodes and weights for Gauss--Hermite quadrature requires $\mathcal{O}(N^2)$ operations.
Even though there are some ways to reduce these costs (e.g. by precomputing the nodes/weights or using the Glaser--Lui--Rokhlin algorithm \cite{Glaser:2007}, which only requires $\mathcal{O}(N)$ operations), these additional costs must be taken into account, especially for higher dimensions.

\subsection{Dependence of the grid sizes on the semiclassical parameter}\label{sec:Dependence}
In Remark~\ref{remark_dependence} we have already discussed the fact that our estimates in Section~\ref{discretisation_via} lead to constants $C_s^{(\operatorname{RS)}},\,C_s^{(\operatorname{GH})}$ and $C_{\Gamma_q}$ whose dependence on the semiclassical parameter is not optimal.
Hence, it is not yet clear how the grid sizes $\Delta q$ and $\Delta p$ must scale with $\e$ to obtain a certain accuracy of the wave packet reconstruction.
To better understand the scaling in practice, we have therefore investigated the dependence of the grid sizes on $\e$ in numerical experiments.
Figure~\ref{fig:figure7} shows the scaling of $\Delta p$ for the reconstruction of the wave packet in \eqref{eq:wpex1} based on $M=64$ grid points in position space.
%
%
\begin{figure}
	\includegraphics{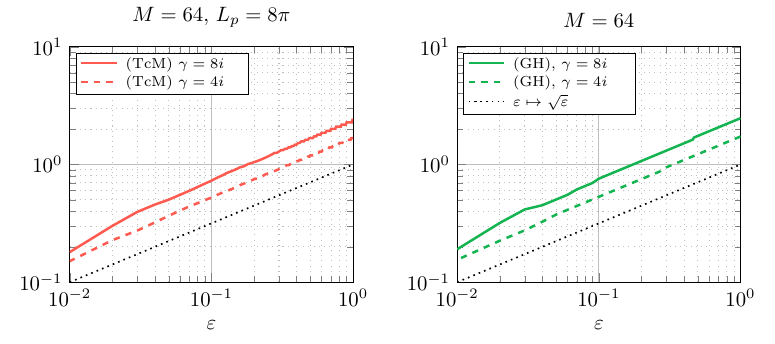}
	\caption{Dependence of $\Delta p$ on $\e$ for Riemann sums (left) and Gauss--Hermite quadrature (right) in log-log scale.
		To maintain a given accuracy of the wave packet reconstruction, $\Delta p$ must scale as $\sqrt{\e}$ for both quadrature rules.}
	\label{fig:figure7}
\end{figure}
%
Note that there is no uniform grid size $\Delta p$ for Gauss--Hermite quadrature.
Therefore, we have plotted the \emph{minimum grid size}, that is, $\Delta p:=\min_j p_j-p_{j-1}$.
As we can see, $\Delta p$ must scale as $\sqrt{\e}$ for both the truncated Riemann sums (left) and Gauss--Hermite quadrature (right) to maintain a fixed accuracy of the wave packet reconstruction, which in our case was chosen as $10^{-6}$.
Moreover, it can be seen that the smaller value $\gamma=4i$ (width of the basis functions) leads to slightly smaller grid sizes for both quadrature rules (dashed lines).

The scaling of $\Delta q$ can be found in Figure~\ref{fig:figure8}.
%
%
\begin{figure}[h]
	\includegraphics{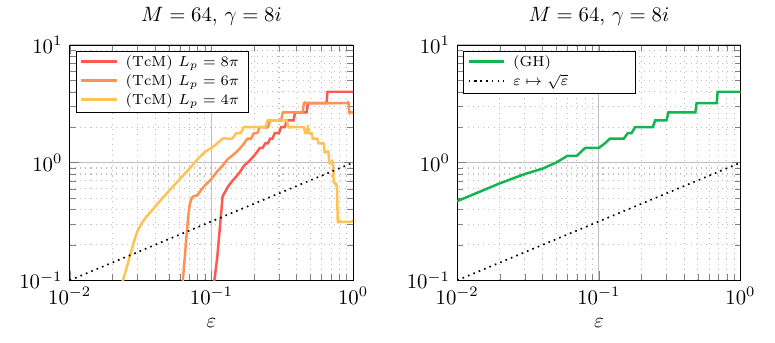}
	\caption{Dependence of $\Delta q$ on $\e$ for Riemann sums (left) and Gauss--Hermite quadrature (right).
	For Gauss--Hermite quadrature, $\Delta q$ must scale as $\sqrt{\e}$.
	In contrast, for Riemann sums the decay depends on the choice of the truncation parameter $L_p$ and eventually shows a very fast decay.}
	\label{fig:figure8}
\end{figure}
%
Here, the reconstruction of the wave packet was performed on $N=64$ grid points in momentum space and the width parameter $\gamma=8i$.
As we can see, the dependence on $\e$ shows different behavior for the two quadrature rules.
While $\Delta q$ scales as $\sqrt{\e}$ for Gauss--Hermite quadrature (right), we notice that for Riemann sums (left) the scaling depends crucially on the choice of the truncation parameter (red: $L_p=8\pi$; orange: $L_p=6\pi$; yellow: $L_p=4\pi$).
This effect can only be explained by the additional contribution of the truncation error, which scales exponentially in $L_p^2/\e$ according to our theoretical result in Lemma~\ref{fact:error_midpoint}.
Consequently, for (truncated) Riemann sums, small values of $\e$ lead to larger contributions of the truncation error, such that the number of grid points in position space must be increased to maintain the given accuracy.
Since there is no truncation error produced by Gauss--Hermite quadrature, this experiment in particular shows once more that our new variant of the Gaussian wave packet transform is capable of outperforming simple Riemann sums.

\section{Conclusion and Qutlook}\label{sec:Conclusion and outlook}
The discretisation of the FBI inversion formula \eqref{eq:inverse_FBI} can be used to approximate Gaussian wave packets by linear combinations of Gaussian basis functions, which are of particular interest for applications to quantum dynamics.
Here, we have shown how different quadrature rules can be used to obtain different variants of the Gaussian wave packet transform.
In addition to discretisations based on uniform Riemann sums previously used by other authors, we have introduced a new variant based on Gauss--Hermite quadrature.
Moreover, we have presented a rigorous error analysis showing that Gauss--Hermite quadrature significantly reduces the number of grid points in momentum space and the different variants have been implemented in MATLAB to underline our theoretical results.

We anticipate that our error analysis will also be useful for representations of wave packets with similar basis functions, for example those which approximate a Gaussian profile but have compact support and have been used by Qian and Ying to avoid the inversion of the underlying frame operator, which typically leads to instabilities in numerical calculations due to the underlying ill-conditioned inversion of the overlap matrix.
To make the method applicable to high-dimensional systems, the curse of dimensionality as a result of quadrature in every coordinate direction must be overcome and the detailed mathematical formulation presented in this paper provides the theoretical fundamentals for combining the method with tensor-train techniques, which we plan to explore in our future research.

\section{Acknowledgments}
Paul Bergold acknowledges the support of Grant 62210 from the John Templeton Foundation.
The opinions expressed in this publication are those of the authors and do not necessarily reflect the views of the John Templeton Foundation.
Caroline Lasser acknowledges the support of the research project ``Attosecond Quantum Dynamics Beyond the Born--Oppenheimer Approximation'' funded by the Centre for Advanced Study in Oslo, Norway.

\appendix
\section{Summation Curve}\label{sec:summation_curve_appendix}
Using a convolution of an unshifted Gaussian with a Dirac comb, it is proven in \cite[Appendix~A]{Margrave:2001} that, for $\Delta q>0, T>0$ and all $x\in\R$, we have
\begin{align}\label{app:formula_ML01}
	\sum_{k\in\Z}\frac{\Delta q}{T\sqrt{\pi}}\exp\left(-\frac{(x-k\cdot\Delta q)^2}{T^2}\right)
	=1+2\sum_{n=1}^\infty \cos\left(\frac{2\pi nx}{\Delta q}\right)\exp\left(-\left(\frac{\pi nT}{\Delta q}\right)^2\right),
\end{align}
where the convergence is uniform in $x$.
We use this expansion to prove Lemma~\ref{fact_estimate_s}:
\begin{proof}[Proof (of Lemma~\ref{fact_estimate_s})]
	Let $x\in\R$.
	Using \eqref{app:formula_ML01} for $T=\sqrt{\e/\gamma_i}$, we obtain
	\begin{align*}
		\sum_{k\in\Z}\frac{\sqrt{\gamma_i}}{\sqrt{\pi\e}}\exp\!\left(-\frac{\gamma_i}{\e}(x-q_k)^2\right)
		=\frac{1}{\Delta q}\left(1 + 2\sum_{n=1}^\infty\cos\!\left(\frac{2\pi nx}{\Delta q}\right)\exp\!\left(-\frac{\pi^2 n^2\e}{\gamma_i(\Delta q)^2}\right)\right),
	\end{align*}
	which establishes equation~\eqref{eq:expansion_S}.
	For the following calculations let us introduce $r:=\exp(-\pi^2\e/\gamma_i(\Delta q)^2)<1$.
	We then get
	\begin{align*}
		\left|S(x)-\frac{1}{\Delta q}\right|
		&\le\frac{2}{\Delta q}\sum_{n=1}^\infty\left|\cos\left(\frac{2\pi nx}{\Delta q}\right)\exp\left(-\frac{\pi^2 n^2\e}{\gamma_i(\Delta q)^2}\right)\right|\\
		&\le\frac{2}{\Delta q}\sum_{n=1}^\infty\exp\left(-\frac{\pi^2 n\e}{\gamma_i(\Delta q)^2}\right)
		=\frac{2}{\Delta q}\sum_{n=1}^\infty r^n,
	\end{align*}
	and since $e^z-1>z^s/s!$ for all $z>0$ and all $s\in\N$, we conclude that
	\begin{align*}
		\sum_{n=1}^\infty r^n
		=\frac{r}{1-r}
		=\frac{1}{\exp(\pi^2\e/\gamma_i(\Delta q)^2)-1}
		<\frac{s!\gamma_i^s}{\pi^{2s}\e^s}(\Delta q)^{2s}.
	\end{align*}
	Moreover, a short calculation proves that the summation curve is $\Delta q$-periodic,
	\begin{align*}
		S(x+\Delta q)
		=\sum_{k\in\Z}|g_0(x-(k-1)\cdot\Delta q)|^2
		=S(x),\quad x\in\R,
	\end{align*}
	and by the Weierstrass test, see e.g. \cite[Chapter~3.34]{Whittaker:1996}, the infinite sum converges absolutely and uniformly on any set.
	Therefore, by its periodicity it follows that $S\in C^\infty(\R)$.
\end{proof}
Provided that the position grid $\{q_k\}_{k\in\Gamma_q}$ is aligned with the eigenvectors of the symmetric and positive definite matrix $\operatorname{Im}{C}$, the multivariate summation curve can be written as a product of one-dimensional summation curves, which can be expanded according to \eqref{eq:expansion_S} themselves.
More precisely, if $\operatorname{Im}{C}=UDU^T$ is an eigendecomposition with corresponding eigenvalues $\lambda_1,\dots,\lambda_d>0$ and
\begin{align*}
	\Gamma_q
	=\Gamma^{(1)}_q\times\cdots\times\Gamma^{(d)}_q,
\end{align*}
then the summation curve can be decomposed for all $x\in\R^d$, as follows:
\begin{align}\label{eq:S_prod}
	S(x)
	=\prod_{n=1}^dS_n(x)
	:=\prod_{n=1}^d\left(\sum_{k_n\in\Gamma_q^{(n)}}g_n(x^Tu_n-k_n\Delta q)^2\right),
\end{align}
where $u_n$ is the $n$th column of $U$ and the one-dimensional functions $g_n$ are
\begin{align}\label{eq:gny}
	g_n(y)
	=(\pi\e)^{-1/4}\lambda_n^{1/4}\exp\left(-\frac{\lambda_n}{2\e}y^2\right),\quad
		y\in\R.
\end{align}
%

\subsection{Estimates for the summation curve}\label{sub:Estimates for the summation curve}
To prove the existence of a constant $C_{\Gamma_q}>0$ with
\begin{align}\label{eq:bound_sum_final}
	\sup_{x\in\Lambda_x}\frac{1}{S(x)}\sum_{k\in\Gamma_q}|g_0(x-q_k)|
	<C_{\Gamma_q},
\end{align}
as used in the proof of Theorem~\ref{fact:final}, we use two one-dimensional bounds:
The first one is an upper bound for the infinite series
\begin{align*}
	\sum_{k\in\Z}|g_0(x-q_k)|
\end{align*}
and the second one a lower bound for $S(x)$.
\begin{lemma}[Upper bound]\label{fact:app:upper}
	For $d=1$ consider the Gaussian $g$ in \eqref{def:g} with width parameter $\gamma=\gamma_r+i\gamma_i\in\C,\,\gamma_i>0$.
	Then, for $\Gamma_q=\Z$ and the uniform grid points $q_k = k\Delta q$ with distance $\Delta q>0$, we have for all $x\in\R$:
	\begin{align}\label{eq:lower_bound_S2_app}
		\sum_{k\in\Z}|g_0(x-q_k)|
		<c_{\Delta q,\e,\gamma},
	\end{align}
	with upper bound $c_{\Delta q,\e,\gamma}=\sqrt{2}(\pi\e)^{1/4}\gamma_i^{-1/4}\frac{1}{\Delta q}\left(1+\Delta q\sqrt{\frac{\gamma_i}{2\pi\e}}\right)$.
\end{lemma}
\begin{proof}
	Using formula \eqref{app:formula_ML01}, we get for all $x\in\R$:
	\begin{align*}
		\sum_{k\in\Z}|g_0(x-q_k)|
		\le\sqrt{2}(\pi\e)^{1/4}\gamma_i^{-1/4}\frac{1}{\Delta q}\left(1+2\sum_{n=1}^\infty\exp\left(-\frac{2\e\pi^2n^2}{\gamma_i\Delta q^2}\right)\right).
	\end{align*}
	Hence, \eqref{eq:lower_bound_S2_app} follows by the estimate
	\begin{align*}
		\sum_{n=1}^\infty\exp\left(-\frac{2\e\pi^2n^2}{\gamma_i\Delta q^2}\right)
		\le\int_0^\infty\exp\left(-\frac{2\e\pi^2z^2}{\gamma_i\Delta q^2}\right)\,\mathrm{d}z
		=\frac{\Delta q}{2}\sqrt{\frac{\gamma_i}{2\pi\e}}.
	\end{align*}
\end{proof}
\begin{lemma}[Lower bound]\label{fact:app:lower}
	For $d=1$ consider the Gaussian $g$ in \eqref{def:g} with width parameter $\gamma=\gamma_r+i\gamma_i\in\C,\,\gamma_i>0$.
	Moreover, consider the uniform grid
	\begin{align*}
		q_{k}
		=q_{0}-L_q+\frac{2k-1}{2}\cdot\Delta q,
		\quad k=1,\dots,M,
	\end{align*}
	where $\Delta q=2L_q/M$.
	Then, there exists a positive constant $C_{\e,\gamma,L_q}>0$, depending on $\e,\gamma$ and $L_q$, such that for all $x\in[q_0-L_q,q_0+L_q]$:
	\begin{align}\label{eq:lower_bound_S3}
		S(x)
		=\sum_{k=1}^M|g_0(x-q_k)|^2
		>C_{\e,\gamma,L_q}.
	\end{align}
\end{lemma}
\begin{proof}
	Let $\bar x\in[q_0-L_q,q_0+L_q]$ and denote by $q_K$ the nearest grid point.
	Hence, there exists $t\in (-\Delta q/2,\Delta q/2]$ such that $\bar x=q_K+t$, and without any loss of generality we assume that $t\ge 0$.
	Now, we decompose the summation curve as
	\begin{align*}
		S(x)
		=S_{K}(x)+\big(S(x)-S_{K}(x)\big),
	\end{align*}
	where the sum $S_K$ is defined by
	\begin{align*}
		S_{K}(x)
		:=\sum_{k=1}^K|g_0(x-q_k)|^2.
	\end{align*}
	In particular, $S_{K}$ is symmetric to the axis
	\begin{align*}
		x
		=
		\begin{cases}
			q_l, & \text{if $K=2l-1$},\\
			q_l+\Delta q/2, & \text{if $K=2l$}.
		\end{cases}
	\end{align*}
	Moreover, since $S_{K}$ is strictly increasing on $[q_0-L_q,q_1]$ and $S-S_{K}$ is strictly increasing on $[q_0-L_q,q_{K+1}]$, we conclude that
	\begin{align*}
		S(q_0-L_q)
		<S_{K}(q_1-t)+\big(S(\bar x)-S_{K}(\bar x)\big)
		=S(\bar x),
	\end{align*}
	which shows that on the interval $[q_0-L_q,q_1]$ the summation curve attains its minimum at $q_0-L_q$.
	In particular, using that the Gaussian amplitude function $g$ is monotone decreasing on $[0,\infty)$, we obtain
	\begin{align*}
		S(q_0-L_q)
		>\frac{1}{2\Delta q}\frac{2}{\sqrt{\pi}}\int_{\Delta q\sqrt{\gamma_i/\e}}^{M\Delta q\sqrt{\gamma_i/\e}}e^{-z^2}\,\mathrm{d}z.
	\end{align*}
	Hence, \eqref{eq:lower_bound_S3} follows for
	\begin{align*}
		C_{\e,\gamma,L_q}
		=\frac{1}{2\Delta q}\left(\operatorname{erf}\left(2L_q\sqrt{\frac{\gamma_i}{\e}}\right)-\operatorname{erf}\left(\Delta q\sqrt{\frac{\gamma_i}{\e}}\right)\right).
	\end{align*}
\end{proof}
\begin{proof}[Proof (for the upper bound in \eqref{eq:bound_sum_final})]
	We have
	\begin{align*}
		\frac{1}{S(x)}\sum_{k\in\Gamma_q}|g_0(x-q_k)|
		<\frac{1}{S(x)}\sum_{k\in\Z^d}|g_0(x-q_k)|
	\end{align*}
	and since the grid is aligned with the eigenvalues of the matrix $\operatorname{Im}C$, we obtain the following factorisation:
	\begin{align*}
		\frac{1}{S(x)}\sum_{k\in\Z^d}|g_0(x-q_k)|
		=\prod_{n=1}^d\frac{1}{S_n(x)}\sum_{k_n\in\Z}|g_n(x^Tu_n-k_n\Delta q)|,
	\end{align*}
	where the one-dimensional summation curves $S_n$ are defined according to \eqref{eq:S_prod} for $\Gamma_q^{(n)}=\{1,\dots,M\}$ and the Gaussian functions $g_n$ are given in \eqref{eq:gny}.
	In particular, by the upper bound in \eqref{eq:lower_bound_S2_app} and the lower bound in \eqref{eq:lower_bound_S3}, we conclude that
	\begin{align*}
		\sup_{x\in\Lambda_x}\frac{1}{S_n(x)}\sum_{k_n\in\Z}|g_n(x^Tu_n-k_n\Delta q)|
		<\frac{c_{\Delta q,\e,\lambda_n}}{C_{\e,\lambda_n,L_q}},
		\quad
		n=1,\dots,d.
	\end{align*}
	Hence, the bound in \eqref{eq:bound_sum_final} follows for
	\begin{align}\label{eq:CGamma_final}
		C_{\Gamma_q}
		=\prod_{n=1}^d\frac{c_{\Delta q,\e,\lambda_n}}{C_{\e,\lambda_n,L_q}}
		=\frac{2^{3d/2}(\pi\e)^{d/4}}{\det(\operatorname{Im}C)^{1/4}}\prod_{n=1}^d\frac{1+\Delta q\sqrt{\frac{\lambda_n}{2\pi\e}}}{\operatorname{erf}\left(2L_q\sqrt{\frac{\lambda_n}{\e}}\right)-\operatorname{erf}\left(\Delta q\sqrt{\frac{\lambda_n}{\e}}\right)}.
	\end{align}
\end{proof}
%

\section{Connection to Hermite Polynomials}\label{sec:Hermite_appendix}
In the following we show that the derivatives of the function $f_{k,x,n}$ defined in \eqref{eq:def_fkxn} can be expressed in terms of Hermite polynomials with affinely-shifted complex-valued argument.
Recall that the $s$th derivative of the exponential function
\begin{align*}
	g(\xi)
	=\exp\left(\alpha\xi^2+\beta\xi\right)
\end{align*}
can be expressed in terms of the second order Kamp\'{e} de F\'{e}ri\'{e}t polynomial as
\begin{align*}
	g^{(s)}(\xi)
	&=g(\xi)\,s!\sum_{m=0}^{\lfloor s/2\rfloor}\frac{\alpha^m(2\alpha\xi+\beta)^{s-2m}}{m!\,(s-2m)!}
	\quad\text{for all $\xi\in\R$.}
\end{align*}
Moreover, recall that for $\alpha=-1/2\e$ and $\beta=ib_{k,n}(x)/\e$ we have $g(\xi)=f_{k,x,n}(\xi)$.
Hence, the $s$th derivative of $f_{k,x,n}$ can be written as
\begin{align*}
	f_{k,x,n}^{(s)}(\xi)
	&=\exp\left(-\frac{\xi^2}{2\e}+\frac{ib_{k,n}(x)\xi}{\e}\right)s!\sum_{m=0}^{\lfloor s/2\rfloor}\frac{(-1)^m\big(-\xi/\e+ib_{k,n}(x)/\e\big)^{s-2m}}{2^m\e^mm!\,(s-2m)!}\\
	&=(-1)^s(2\e)^{-s/2}\exp\left(-\frac{b_{k,n}(x)^2}{2\e}\right)\exp\left(-\left(\frac{\xi-ib_{k,n}(x)}{\sqrt{2\e}}\right)^2\right)\dots\\
		&\qquad s!\sum_{m=0}^{\lfloor s/2\rfloor}\frac{(-1)^m\,2^{s-2m}\left(\displaystyle\frac{\xi-ib_{k,n}(x)}{\sqrt{2\e}}\right)^{s-2m}}{m!\,(s-2m)!}\\
	&=(-1)^s(2\e)^{-s/2}\exp\left(-\frac{b_{k,n}(x)^2}{2\e}\right)e^{-z^2}H_s\left(z\right),
\end{align*}
where
\begin{align*}
	z
	=\frac{\xi-ib_{k,n}(x)}{\sqrt{2\e}}
	\quad\text{and}\quad
	H_s(z)
	:=s!\sum_{m=0}^{\lfloor s/2\rfloor}\frac{(-1)^m(2z)^{s-2m}}{m!\,(s-2m)!}
\end{align*}
is the $s$th (physicist's) Hermite polynomial.


\end{document}